\documentclass{amsart}
\usepackage{amssymb}
\usepackage{amsthm}
\usepackage{url}
\usepackage{mathtools}
\mathtoolsset{showonlyrefs=true}
\allowdisplaybreaks
\usepackage{Article}
\usepackage{Symbols}
\usepackage{hyperref}
\usepackage[
	backend=biber,
	style=alphabetic,
	maxalphanames=4,
	sorting=nty,
	arxiv=abs,
	doi=false,
	url=false,
	isbn=false,
	eprint=false
]{biblatex}

\makeatletter
\renewcommand{\subsection}{ \@startsection{subsection}{2}{\z@}  {.5\linespacing\@plus.7\linespacing}  {.5\linespacing}  {\normalfont\bfseries}}
\makeatother

\counterwithin{equation}{section}

\renewcommand{\thetheorem}{\thesection.\arabic{theorem}}

\makeatletter
\newcommand{\undeclare@theorem}[1]{%
	\expandafter\let\csname #1\endcsname\relax
	\expandafter\let\csname end#1\endcsname\relax
}
\@for\@tempa:={theorem,definition,lemma,proposition,corollary,conj,remark,note,eg,fact}\do{%
	\undeclare@theorem{\@tempa}%
	\undeclare@theorem{\@tempa*}%
}
\let\c@theorem\relax
\let\thetheorem\relax
\makeatother

\theoremstyle{plain}
\newtheorem{theorem}{Theorem}[section]
\newtheorem{lemma}[theorem]{Lemma}
\newtheorem{proposition}[theorem]{Proposition}
\newtheorem{corollary}[theorem]{Corollary}
\newtheorem{fact}[theorem]{Fact}

\newtheorem*{theorem*}{Theorem}
\newtheorem*{lemma*}{Lemma}
\newtheorem*{proposition*}{Proposition}
\newtheorem*{corollary*}{Corollary}
\newtheorem*{conj*}{Conjecture}

\makeatletter
\newtheorem*{maintheorem@statement}{Main Theorem}
\newenvironment{maintheorem}
	{\phantomsection\def\@currentlabel{Main Theorem}\maintheorem@statement}
	{\endmaintheorem@statement}
\makeatother

\theoremstyle{definition}

\newtheorem{remark}[theorem]{Remark}

\newtheorem*{definition*}{Definition}

\DeclareFieldFormat*{title}{\mkbibquote{#1\adddot}}
\DeclareFieldFormat[article]{book}{\mkbibemph{#1}}
\DeclareFieldFormat*{volume}{\mkbibbold{#1}}
\renewbibmacro{in:}{}
\DeclareFieldFormat{pages}{#1}

\renewbibmacro*{journal+issuetitle}{%
	\usebibmacro{journal}%
	\setunit*{\addspace}%
	\iffieldundef{series}
		{}
		{\newunit\printfield{series}\setunit{\addspace}}%
	\usebibmacro{volume+number+eid}%
	\setunit{\addspace}%
	\usebibmacro{issue+date}%
	\setunit{\addcolon\space}%
	\usebibmacro{issue}%
	\newunit
}

\DeclareFieldFormat*{title}{\mkbibemph{#1}}
\DeclareFieldFormat*{journaltitle}{\textnormal{#1}}

\title[The Constant in Thomae-Type Formulas]{The Constant in Thomae-Type Formulas for Eight Points on the Complex Projective Line}
\author{{\sc Nakano} Ryunosuke}
\address{Graduate School of Science, Hokkaido University, Sapporo 060-0810, Japan}
\email{nakano.ryunosuke.i3@elms.hokudai.ac.jp}
\subjclass[2020]{11F55, 32N15, 14K25}
\keywords{automorphic forms, theta constants, period map}

\begin{document}

\begin{abstract}
	We consider the family of cyclic fourfold covers \(w^4 = \prod_{j=1}^{7}(z-x_j)\) of the complex projective line branched at the eight points \(x_1,\ldots,x_7,\infty\).
	The period map identifies the configuration space \(X(2,8)\) of the branch points with a Zariski open subset of a quotient of the five-dimensional complex ball.
	The inverse of the period map is expressed projectively by \(105\) automorphic forms \(f_J\), which are proportional to the signed branch-point polynomials \(\hat x_J\) with a common scalar factor.
	We determine this factor for the period \(\eta\) of the differential \(dz/w\): it is the product of the constant \(-1/\bigl(2^{12}\varGamma(3/4)^{16}\bigr)\) and the square of a quadratic form in \(\eta\).
\end{abstract}

\maketitle

\section{Introduction}
For a hyperelliptic curve \(y^2 = \prod_j (z - x_j)\), Thomae's classical formula expresses the values of theta constants at its period matrix as products of differences of the branch points \(x_j\); see \cite[Proposition 3.6]{Fay73}.
This formula is generalized to that for the \(Z_N\) curves \(s^N = \prod_j (z-\lambda_j)\) in \cite{Nak97}.
The period maps of several families of cyclic covers of \(\Ps^1\) take values in complex balls in the sense of Deligne--Mostow \cite{DM86} and Terada \cite{Ter83}.
Thomae-type formulas are established for such families; cf.\ \cite{Mat89,KS07,CM23} and the references therein.
In particular, a Thomae-type formula is established in \cite{MN26} for the cyclic fourfold covers of \(\Ps^1\) attached to the Deligne--Mostow admissible sequence \((\tfrac14,\tfrac14,\tfrac14,\tfrac14,\tfrac14,\tfrac34)\), where the period map takes values in a three-dimensional complex ball.

We study the family of cyclic fourfold covers
\[
	C(x)\colon\quad w^4 = \prod_{j=1}^{7}(z - x_j)
\]
of \(\Ps^1\) branched at \(x_1,\ldots,x_7,x_8 = \infty\).
The configuration space \(X(2,8)\) of the branch points is defined as the quotient
\[
	X(2,8) = \PGL(2,\C) \backslash
	\{ (x_1,\ldots,x_8) \in (\Ps^1)^8 \mid
	x_j \neq x_k\ (j \neq k) \},
\]
where \(\PGL(2,\C)\) acts diagonally on \((\Ps^1)^8\).
We represent a point of \(X(2,8)\) by \(x = (x_1,\ldots,x_8)\).
Sending \((x_1,x_7,x_8)\) to \((0,1,\infty)\) by an element of \(\PGL(2,\C)\), we obtain a complete set
\[
	\{(0,x_2,\ldots,x_6,1,\infty) \in (\Ps^1)^8 \mid x_j\neq 0,1\ (2 \leq j \leq 6),\
	x_j\neq x_k\ (2\le j<k\le 6)\}
\]
of representatives for \(X(2,8)\).
We write \(\varpi \colon \widetilde{X}(2,8) \to X(2,8)\) for the universal cover of \(X(2,8)\).
As shown in \cite{MY93,MT04}, the period map \(\per\) identifies \(X(2,8)\) with a Zariski open subset of the ball quotient \(\Gamma_{M}\backslash\mathbb{B}_5\), and its lift \(\widetilde{\per}\) identifies \(\widetilde{X}(2,8)\) with that of \(\mathbb{B}_5\).
Here \(\mathbb{B}_5\) is the five-dimensional complex ball defined by the quotient
\[
	\mathcal{B}/\mathbb{C}^\ast= \{\eta \in \mathbb{C}^6 \mid \Transpose{\overline{\eta}} U \eta < 0\}/\mathbb{C}^\ast = \{\eta \in \mathbb{P}^5 \mid \Transpose{\overline{\eta}} U \eta < 0\},
\]
where
\[
	U =
	\begin{pmatrix}
		I_2 &    &    &     \\
		    &    & -1 &     \\
		    & -1 &    &     \\
		    &    &    & I_2
	\end{pmatrix},\qquad I_2 =
	\begin{pmatrix}
		1 & 0 \\
		0 & 1
	\end{pmatrix},
\]
and \(\Gamma_{M}\) is the conjugate \(T\Gamma(1+i)T^{-1}\) of the principal congruence subgroup \(\Gamma(1+i)\) of level \(1+i\) of the Picard modular group \(\Gamma\) over \(\Z[i]\) by \(T\); see Section~\ref{sec:preliminaries} for the definitions of \(\Gamma(1+i)\) and \(T \in \GL(6,\mathbb{Q}[i])\).
We write \(\varGamma\) for Euler's gamma function and \(\Gamma\) for the Picard modular group.
Note that the period \(\eta = \per(x)\) can be lifted to a value in \(\mathcal{B}\) by taking a path in \(X(2,8)\), and we write \(\eta(\tilde x)\) for the value at \(\tilde x \in \widetilde{X}(2,8)\).
Matsumoto and Terasoma \cite{MT04} constructed \(105\) automorphic forms \(T_J^{(2)}\) on \(\mathbb{B}_5\), indexed by the \((2,2,2,2)\)-partitions \(J = \langle j_1j_2;j_3j_4;j_5j_6;j_7j_8 \rangle\) of \(\{1,\ldots,8\}\), that is, by the partitions of \(\{1,\ldots,8\} = \{j_1,\ldots,j_8\}\) into four blocks \(\{j_1,j_2\}, \{j_3,j_4\}, \{j_5,j_6\}, \{j_7,j_8\}\) with \(j_{2k-1} < j_{2k}\).
They proved in \cite[Theorem 6.4]{MT04} that the projective point defined by these forms coincides, through the period map, with the projective point defined by the \(105\) polynomials
\[
	x_J = \prod_{k=1}^{4}\left( x_{j_{2k-1}} - x_{j_{2k}} \right),
\]
where \(x = (x_1,\ldots,x_8) \in X(2,8)\) and the factor containing \(\infty\) is omitted.
Matsumoto, Minowa, and Nishimura \cite{MMN07} expressed all \(105\) forms, denoted by \(f_J\), as explicit polynomials in theta constants.
In their tables, to each form \(f_J\) there corresponds the polynomial \(\hat x_J = \varepsilon_J x_J\) with \(\varepsilon_J \in \{\pm1\}\).
The results of \cite{MT04,MMN07} determine the vector \((f_J(\eta))_J\) up to a common scalar factor: this vector is proportional to \((\hat x_J(x))_J\).
In this paper, we determine this common scalar factor.
Our main result is the following.

\begin{maintheorem}
	\label{thm:main}
	Let \(\tilde x \in \widetilde{X}(2,8)\), and put \(x = \varpi(\tilde x)\) and \(\eta = \eta(\tilde x) \in \mathcal{B}\).
	Then, for every \((2,2,2,2)\)-partition \(J\), we have
	\begin{equation}
		\label{eq:thomae}
		f_J(\eta) = \kappa\,(\Transpose{\eta}U\eta)^2\, \hat x_J(x),
		\qquad
		\kappa = -\frac{\vartheta_{00}(i)^{16}}{(8\pi)^4}
		= -\frac{1}{2^{12}\,\varGamma(3/4)^{16}}.
	\end{equation}
\end{maintheorem}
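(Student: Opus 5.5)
The plan has four steps: reduce the statement to one scalar function, show by weights and growth that this function is a constant, and then compute the constant by degenerating to the boundary of the configuration space.

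\textbf{Reduction to a single scalar function.} By \cite[Theorem 6.4]{MT04} and the tables of \cite{MMN07}, the vector \((f_J(\eta))_J\) is proportional to \((\hat x_J(x))_J\). Hence there is a function \(c(\tilde x)\) on \(\widetilde{X}(2,8)\) such that
\[
	f_J(\eta(\tilde x)) = c(\tilde x)\,\hat x_J(x)
	\qquad\text{for all } J .
\]
Set
\[
	\phi(\tilde x) = \frac{c(\tilde x)}{(\Transpose{\eta}U\eta)^2}.
\]
The claim is then that \(\phi\) is identically equal to \(\kappa\).

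\textbf{Step 1: \(\phi\) is constant.} I would show that \(\phi\) is a single-valued, nowhere-vanishing holomorphic function on \(X(2,8)\) that is invariant under \(\PGL(2,\C)\).
\begin{itemize}
	\item \emph{Homogeneity.} The forms \(f_J\) have weight \(4\) in \(\eta\), since they are quartic in theta constants and correspond to \(T_J^{(2)}\). The factor \((\Transpose{\eta}U\eta)^2\) is also of degree \(4\) in \(\eta\), so \(\phi\) is independent of the scaling of \(\eta\).
	\item \emph{Monodromy.} Under an element \(g\in\Gamma_M\), the factor \(\Transpose{\eta}U\eta\) is preserved because \(g\) preserves \(U\). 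The forms \(f_J\) transform by a character of \(\Gamma_M\). Because the \(\hat x_J\) are single-valued, this character must be trivial on the image of monodromy, so \(\phi\) is single-valued.
	\item \emph{M\"obius invariance.} Moving a branch point by a M\"obius map rescales \(dz/w\), and hence \(\eta\), by a factor built from the \(x_j\). I would check that the induced factor on \((\Transpose{\eta}U\eta)^2\) matches the multidegree of \(\hat x_J\) (each index appears exactly once). This requires the classical expression of \(\Transpose{\eta}U\eta\) via the Riemann bilinear relation as \(\int \omega\wedge\omega\)-type pairings.
	\item \emph{Growth.} Near the boundary divisors \(x_j=x_k\), I would compare the vanishing orders. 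The form \(f_J\) vanishes exactly where the corresponding \(\hat x_J\) does. Using local expansions of \(\eta\) near the mirrors, \(\phi\) extends holomorphically and without zeros over the compactification, for instance the Satake--Baily--Borel compactification, or \((\Ps^1)^5\) in the normalized coordinates.
\end{itemize}
A holomorphic function on a compact connected space is constant, so \(\phi\) is constant. Alternatively, one can cite the known weight/character statements for \(T_J^{(2)}\) in \cite{MT04}, so that the quotient is an automorphic function of weight \(0\) without poles, hence constant.

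\textbf{Step 2: the value of the constant.} I would evaluate \(\phi\) at a convenient degeneration in which the curve splits into pieces with complex multiplication by \(\Z[i]\). A natural choice is to let points collide in pairs, \(x_1,x_2\to x_3,\dots\), or to take a configuration with extra symmetry, such as the \(x_j\) placed at the fourth roots of unity together with \(0\) and \(\infty\).
\begin{itemize}
	\item In such a limit, the periods \(\eta\) reduce to Beta integrals \(B(1/4,1/4)\) and \(B(1/4,1/2)\), which are expressible through \(\varGamma(1/4)\) and \(\varGamma(3/4)\).
	\item The theta constants reduce to \(\vartheta_{00}(i)\), using \(\vartheta_{00}(i)=\pi^{1/4}/\varGamma(3/4)\). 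This accounts for the identity \(\vartheta_{00}(i)^{16}/(8\pi)^4 = 1/(2^{12}\varGamma(3/4)^{16})\).
\end{itemize}
Concretely, I would pick a single \(J\) for which \(f_J\) restricts to a product of simple theta constants at the boundary point. I would then compute both sides explicitly, including the sign \(\varepsilon_J\), and read off \(\kappa\).

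\textbf{Main obstacle.} I expect the main obstacle to be this final explicit evaluation, namely controlling the precise normalization of \(\eta\) (choice of cycles and the matrix \(T\)) and the sign. The limiting computation needs the asymptotics of \(f_J\) and of \(\Transpose{\eta}U\eta\) at the cusp or mirror, to first nonvanishing order, matched with the corresponding asymptotics of \(\hat x_J\).
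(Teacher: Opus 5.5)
Your architecture matches the paper's: reduce to a single ratio, prove it constant by extending over a compactification, then evaluate at one degenerate point. However, the justification of Step~1 is wrong as written, and Step~2 is not a proof.

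\textbf{Step~1.} Elements of \(\Gamma_M\) preserve the Hermitian form \(h_U\), i.e.\ \(\Transpose{\overline{g}}Ug=U\), not the bilinear form. So \(q_U(g\eta)\neq q_U(\eta)\) in general; in fact \(q_U(g\eta)=\det(g)\,\chi(\jmath(g),\imath(\eta))\,q_U(\eta)\). Nor do the \(f_J\) transform by a character. They pick up the \(\eta\)-dependent factor \(\psi(g,\eta)^2\) with \(\psi(g,\eta)=q_U(g\eta)/q_U(\eta)\). Invariance of \(f_J/q_U^2\) comes from the cancellation of these two nontrivial factors (Proposition~\ref{prop:automorphy}). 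Your inference that the character ``must be trivial because \(\hat x_J\) is single-valued'' is circular.

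Your homogeneity bullet is also wrong. \(f_J(\eta)\) depends only on \([\eta]\) through \(\imath\), so it has degree \(0\), and \(f_J/q_U^2\) has degree \(-4\). That degree is exactly what balances \(\hat x_J\) under rescaling of lifts and \(\GL(2,\C)\) (Lemma~\ref{lem:pgl-covariance}); with your weights the M\"obius bullet could not balance.

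At the boundary you need no vanishing orders. Since the ratio is independent of \(J\), take \(J'\) not pairing the two colliding points, so \(\hat x_{J'}\neq 0\), and use the Deligne--Mostow extension of \(\eta\) into \(\mathcal{B}\) on a finite local cover. Your \((\Ps^1)^5\) option works this way, although collisions with \(x_8=\infty\) need a change of chart, as \(H_\infty\) does in the paper. The Satake--Baily--Borel / ``automorphic function of weight \(0\)'' alternative does not work: \(f_J/q_U^2\) is homogeneous of degree \(-4\) on \(\mathcal{B}\), hence not a function on \(\Gamma_M\backslash\mathbb{B}_5\), and \(\hat x_J\) is not defined there.

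\textbf{Step~2 (the decisive gap).} The value of \(\kappa\), which is the content of the theorem, is never computed. The configurations you name are not usable:
\begin{itemize}
\item the fourth roots of unity together with \(0,\infty\) are six points, not eight;
\item a nondegenerate symmetric configuration gives a genus-\(9\) curve whose theta constants on \(\mathfrak{S}_6\) you give no way to evaluate;
\item at a non-rigid degeneration the periods are hypergeometric in the remaining modulus, not Beta integrals.
\end{itemize}
Your closing remark only restates \(\vartheta_{00}(i)^{16}=\pi^4/\varGamma(3/4)^{16}\); it does not produce the factor \(-1/(8\pi)^4\).

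What is missing is a degeneration with three properties: some \(\hat x_J\) stays nonzero; \(\imath(\eta)\) decomposes so that the theta constants in \(f_J\) reduce to one-variable ones; and the leftover dependence on the modulus provably cancels. The paper takes \((0,0,0,c,c,c,1,\infty)\) and \(J^\ast=\langle14;25;36;78\rangle\), where \(\hat x_{J^\ast}=-c^3\neq0\). Then:
\begin{itemize}
\item \(\eta_c=\Transpose{(0,0,v,v,v,u)}\) gives \(\imath(\eta_c)=iI_2\oplus\tau_c\);
\item the index-\(4\) lattice change \(L\), with a character sum over \(\nu_1,\nu_2\), splits \(\vt{a}{b}(\tau_c)\) into theta constants at \(\tau^\ast/2\), \(-1/(2\tau^\ast)\), \(i/2\);
\item quadratic transformations give \(\lambda(\tau^\ast)=(1-\sqrt c)/2\);
\item Jacobi's identities make all dependence on \(c\) and \(\tau^\ast\) cancel between \(f_{J^\ast}(\eta_c)\), \((\Transpose{\eta_c}U\eta_c)^2\) and \(-c^3\), fixing both the size and the sign of \(\kappa\).
\end{itemize}
A rigid CM point of the kind you have in mind does exist: clusters of sizes \(3,3,2\), which is the \(c\to0\) end of that family, where \(\tau^\ast\to i\) but \(\hat x_{J^\ast}\to0\). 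There you would still have to choose a different \(J\) and carry out an analogous lattice splitting, and none of this is in the proposal.
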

Here we use the formula \(\vartheta_{00}(i) = \pi^{1/4}/\varGamma(3/4)\) to express \(\kappa\) in terms of \(\varGamma(3/4)\).
The constant \(\kappa\) permits comparison with other Thomae-type formulas and with boundary values; see Remark~\ref{rem:limit-curve} for the six-point case of \cite{MN26}.
% We analyze the boundary of the compactification of \(X(2,8)\) in \(\Ps^5\).
% This boundary consists of \(21\) hyperplanes, one of which is the hyperplane at infinity.
We also decompose a theta series on \(\mathfrak{S}_4\) arising from the limit curve of Section~\ref{sec:degeneration} into theta constants on the upper half-plane \(\mathbb{H}=\{\tau\in \C\mid \Im(\tau)>0\}\); see Proposition~\ref{prop:splitting}.
% We choose the degenerate configuration \((0,0,0,c,c,c,1,\infty)\) so that the polynomial attached to \(J^\ast = \langle14;25;36;78\rangle\) does not vanish and the period computation reduces to an evaluation of the elliptic modular function.

We explain our method.
The proof has two parts.
First, we show that for each \(J\) the ratio of \(f_J(\eta)\) to \((\Transpose{\eta}U\eta)^2 \hat x_J(x)\) is constant on \(X(2,8)\), and that this constant does not depend on \(J\); see Proposition~\ref{prop:constancy}.
The invariance of this ratio under the monodromy follows from the automorphy of the forms \(f_J\), which is established in \cite{MMN07}.
The constancy follows from Hartogs' theorem, since every holomorphic function on \(\Ps^5\) is constant.
At a boundary hyperplane where two points coincide, Deligne--Mostow's extension results give, on a finite local cover, a holomorphic extension of the period \(\eta\).
Since the ratio is invariant under the monodromy, the ratio descends from this cover and extends across the hyperplane.
By the equality of the two projective points established in \cite{MT04,MMN07}, the constant does not depend on \(J\).
Second, we evaluate the constant at the degenerate configuration \((0,0,0,c,c,c,1,\infty)\) with \(c \in (0,1)\).
We choose this configuration so that the polynomial attached to \(J^\ast = \langle14;25;36;78\rangle\) does not vanish and the computation of the period reduces to an evaluation of the elliptic modular function.
There the normalization of the limit curve \(w^4 = z^3(z-c)^3(z-1)\) has genus \(3\).
The theta constants split into products of theta constants on \(\mathbb{H}\) at the moduli
\[
	\frac{\tau^\ast}{2},\qquad -\frac{1}{2\tau^\ast},\qquad
	\frac{i}{2},
\]
where \(\tau^\ast = -i(u+v)/(u-v)\), and \(u\) and \(v\) are explicit period integrals of the limit curve; see Section~\ref{sec:degeneration} for their definitions.
Using classical quadratic transformations of the Gauss hypergeometric series, we identify \(\tau^\ast\) with a value at \(m = (1-\sqrt{c})/2\) of a Schwarz map of the Gauss hypergeometric differential equation with parameters \(\tfrac12,\tfrac12;1\); see Lemma~\ref{lem:hypergeometric}.
By using duplication formulas for Jacobi's theta constants, we complete the evaluation of the constant \(\kappa\); see Sections~\ref{sec:degeneration} and~\ref{sec:evaluation}.

\section{Preliminaries}
\label{sec:preliminaries}
In this section, we choose a complete set of representatives for \(X(2,8)\), recall the signs \(\varepsilon_J\) attached to the \((2,2,2,2)\)-partitions, fix the branch of \(w\) as in \cite[Section 2]{MT04}, and introduce the Hermitian form on \(\C^6\).
We then fix the notation for the periods and for the theta functions, define the Picard modular group and the period map, and record the automorphy of the forms \(f_J\) of \cite{MMN07} that we use in Section~\ref{sec:constancy}.

\subsection{The Configuration Space}

The action of \(\PGL(2,\C)\) on \(\Ps^1\) is sharply three-transitive: for any ordered triple of distinct points, there is a unique element of \(\PGL(2,\C)\) that carries it to \((0, 1, \infty)\).
Hence a complete set of representatives for \(X(2,8)\) is given by
\begin{align*}
	 & x = (x_1,\ldots,x_8) = (0, x_2,x_3,x_4,x_5,x_6, 1, \infty), \\
	 & (x_2,\ldots,x_6) \in \{ (x_2,\ldots,x_6) \in \C^5 \mid
	x_j \neq 0,1\,(2 \leq j \leq 6),\
	x_j \neq x_k\,(2 \leq j < k \leq 6) \}.
\end{align*}
The coordinates \((x_2,\ldots,x_6)\) identify \(X(2,8)\) with a Zariski open subset of \(\C^5 \subset \Ps^5\).
For such a representative, we consider the cover \(C(x)\colon w^4 = \prod_{j=1}^{7}(z-x_j)\), whose deck transformation group is generated by
\[
	\rho \colon C(x) \ni (z, w) \longmapsto (z, iw) \in C(x).
\]
The symmetric group \(S_8\) acts on \(x = (x_1,\ldots,x_8)\) by \((\sigma\cdot x)_l = x_{\sigma^{-1}(l)}\); cf.\ \cite[Section 2.1]{MMN07}.
For a \((2,2,2,2)\)-partition \(J\), we write \(\sigma^{-1}(J)\) for the partition obtained by applying \(\sigma^{-1}\) to every entry of \(J\), so that \(x_J(\sigma\cdot x) = \pm x_{\sigma^{-1}(J)}(x)\).
The tables of \cite[Section 5.1 and Theorems 1--4]{MMN07} attach a sign \(\varepsilon_J \in \{\pm 1\}\) to each partition.
The tables list the forms \(f_J\) together with the polynomials \(\pm x_J\), for instance \(-x_{\langle 17;25;68;34\rangle}\) in entry \#30 of \cite[Theorem 2]{MMN07}.
We put
\[
	\hat{x}_J = \varepsilon_J\, x_J.
\]

We set
\[
	X_{\R} = \{ (x_2,\ldots,x_6) \in \R^5 \mid 0 < x_2 < \cdots < x_6 < 1 \} \subset X(2,8),
\]
and we call \(X_{\R}\) the real chamber.
We first work on \(X_{\R}\).
On the interval \(\ell_j = [x_j, x_{j+1}]\) \((1\leq j \leq 6)\), we fix the branch of \(w\) as in \cite[Section 2]{MT04} by
\[
	w\big|_{\ell_j} = \exp\left( \frac{(7-j)\pi i}{4} \right)
	\left( \prod_{k \leq j} (z - x_k) \prod_{k > j} (x_k - z) \right)^{1/4},
\]
where the positive real fourth root of the positive product is taken; this explicit form is given in \cite[Section 2]{MT02}.
We write \(\mathcal{I}_j\) for the integral \(\int_{\ell_j} \frac{dz}{w}\).
This branch takes the positive real fourth root on \(z > 1\) and is continued through \(\mathbb{H}\).
If \(z\) moves past a branch point \(x_k\) through \(\mathbb{H}\), then \(\arg(z-x_k)\) changes from \(\pi\) to \(0\).
On \(\ell_j\), the \(7-j\) factors with \(k>j\) are continued in this way, and they contribute the factor \(\exp\bigl( \frac{(7-j)\pi i}{4} \bigr)\).

\subsection{The Hermitian Form}

We define, for \(\xi, \eta \in \C^6\),
\[
	h_{U}(\xi, \eta) = \Transpose{\overline{\xi}}\, U \eta,
	\qquad
	q_{U}(\eta) = \Transpose{\eta}\, U \eta,
	\qquad
	U =
	\begin{pmatrix}
		1 &   &    &    &   &   \\
		  & 1 &    &    &   &   \\
		  &   &    & -1 &   &   \\
		  &   & -1 &    &   &   \\
		  &   &    &    & 1 &   \\
		  &   &    &    &   & 1
	\end{pmatrix}.
\]
Since \(U\) is real symmetric of signature \((5,1)\), \(h_{U}\) is a Hermitian form of signature \((5,1)\).
The function \(q_{U}\) is the quadratic form associated with the symmetric bilinear form \(\Transpose{\xi}\, U \eta\).
The function \(q_{U}\) appears below in the embedding of \(\mathbb{B}_5\) into the Siegel upper half-space, and in \eqref{eq:thomae} in the form \(\Transpose{\eta}U\eta\).
Since we divide by \(q_{U}\) below, we record the following lemma.

\begin{lemma}
	\label{lem:nonvanishing}
	If \(\eta \in \C^6\) satisfies \(h_{U}(\eta, \eta) < 0\), then \(q_{U}(\eta) = \Transpose{\eta}U\eta \neq 0\).
\end{lemma}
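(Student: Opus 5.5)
Write \(\eta = \Transpose{(\eta_1,\ldots,\eta_6)}\). Then
\[
	h_U(\eta,\eta) = |\eta_1|^2+|\eta_2|^2+|\eta_5|^2+|\eta_6|^2 - 2\Re(\overline{\eta_3}\eta_4),
	\qquad
	q_U(\eta) = \eta_1^2+\eta_2^2+\eta_5^2+\eta_6^2 - 2\eta_3\eta_4 .
\]
The plan is to compare the two expressions directly. We argue by contradiction: assume \(q_U(\eta)=0\) and \(h_U(\eta,\eta)<0\), and derive an inequality that cannot hold.

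The key step is a triangle-inequality estimate. If \(q_U(\eta)=0\), then \(2\eta_3\eta_4 = \eta_1^2+\eta_2^2+\eta_5^2+\eta_6^2\), and therefore
\[
	2|\eta_3||\eta_4| \le |\eta_1|^2+|\eta_2|^2+|\eta_5|^2+|\eta_6|^2 .
\]
On the other hand, \(h_U(\eta,\eta)<0\) gives
\[
	|\eta_1|^2+|\eta_2|^2+|\eta_5|^2+|\eta_6|^2 < 2\Re(\overline{\eta_3}\eta_4) \le 2|\eta_3||\eta_4| .
\]
Combining the two displays yields \(2|\eta_3||\eta_4| < 2|\eta_3||\eta_4|\), which is a contradiction. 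Hence \(q_U(\eta)\neq0\).

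There is no real obstacle in this argument. The only point that needs care is checking the sign conventions of the off-diagonal \(-1\) entries of \(U\). These entries contribute \(-2\Re(\overline{\eta_3}\eta_4)\) to \(h_U\) and \(-2\eta_3\eta_4\) to \(q_U\), and with these signs the argument goes through as written.

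As an alternative viewpoint, one could change variables \(\eta_3=(a+b)/\sqrt2\), \(\eta_4=(a-b)/\sqrt2\). This turns \(U\) into \(\mathrm{diag}(1,1,-1,1,1,1)\) up to ordering, and the statement becomes the standard fact that \(\sum_{k\ne 3}|\zeta_k|^2<|\zeta_3|^2\) forbids \(\sum_{k\ne3}\zeta_k^2=\zeta_3^2\). The direct estimate above avoids the change of variables.
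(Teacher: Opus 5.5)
Your proof is correct, and it takes a different route from the paper's.

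The paper argues without coordinates. It writes \(\eta=\alpha+i\beta\) with \(\alpha,\beta\) real and observes that \(q_U(\eta)=0\) forces \(\Transpose{\alpha}U\alpha=\Transpose{\beta}U\beta\) and \(\Transpose{\alpha}U\beta=0\). Combined with \(h_U(\eta,\eta)=\Transpose{\alpha}U\alpha+\Transpose{\beta}U\beta<0\), this makes \(\alpha,\beta\) span a real plane on which \(U\) is negative definite, which is impossible in signature \((5,1)\).

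Your argument instead uses the explicit entries of \(U\), where the negative direction sits inside the hyperbolic pair \((\eta_3,\eta_4)\), and reduces everything to the triangle inequality. In your alternative remark, a real diagonalization plays the same role. The paper's version is basis-free and applies verbatim to any real symmetric matrix of signature \((n,1)\), at the cost of appealing to the inertia/signature argument. Yours is fully elementary.

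Read without contradiction, your estimate even gives a quantitative bound. By the reverse triangle inequality,
\[
|q_U(\eta)| \ge 2|\eta_3\eta_4| - |\eta_1^2+\eta_2^2+\eta_5^2+\eta_6^2| \ge 2\operatorname{Re}(\overline{\eta_3}\eta_4) - \bigl(|\eta_1|^2+|\eta_2|^2+|\eta_5|^2+|\eta_6|^2\bigr) = -h_U(\eta,\eta),
\]
so \(|q_U(\eta)|\ge -h_U(\eta,\eta)>0\) on \(\mathcal{B}\).
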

\begin{proof}
	We write \(\eta = \alpha + i\beta\) with real \(\alpha,\beta\).
	Since \(U\) is real symmetric, \(h_{U}(\eta, \eta) = \Transpose{\alpha}U\alpha + \Transpose{\beta}U\beta < 0\).
	Suppose that \(q_{U}(\eta)=0\).
	Comparing the real and imaginary parts of \(q_{U}(\eta)\), we obtain \(\Transpose{\alpha}U\alpha = \Transpose{\beta}U\beta < 0\) and \(\Transpose{\alpha}U\beta = 0\).
	These two conditions force \(\alpha\) and \(\beta\) to be linearly independent, since a linear relation between them contradicts \(\Transpose{\alpha}U\beta = 0\) together with \(\Transpose{\alpha}U\alpha < 0\).
	Thus \(\alpha,\beta\) span a \(2\)-dimensional subspace on which \(U\) is negative definite; this contradicts the signature \((5,1)\) of \(U\).
	Hence we have \(q_{U}(\eta) \neq 0\).
\end{proof}

\subsection{Periods and Monodromy}

We recall from \cite{MY93,MT04,MMN07} the cycles, the period, and the monodromy of the family.

\begin{fact}
	\label{fact:MT04-periods}
	\begin{enumerate}
		\item (\cite[Section 2.1, Proposition 2.2]{MT04})
		      Let \(x \in X_{\R}\).
		      We regard each interval \(\ell_j\) \((1 \leq j \leq 6)\) as a path on \(C(x)\) via the branch fixed above.
		      The chains \(A_j = (1-\rho^2)\ell_j\) and \(B_j = \rho A_j\) form a basis of the \((-1)\)-eigenspace \(H_1(C(x),\Z)^-\) of \(\rho^2\) in \(H_1(C(x),\Z)\).
		\item (\cite[Proposition 2.6]{MT04})
		      We set
		      \[
			      \begin{gathered}
				      \mathfrak{a}_1 = A_1,\qquad
				      \mathfrak{a}_2 = A_1 + A_2 + B_2,\qquad
				      \mathfrak{a}_3 = \mathfrak{a}_2 + B_3,\\
				      \mathfrak{a}_4 = \mathfrak{a}_3 - A_4 + B_4,\qquad
				      \mathfrak{a}_5 = \mathfrak{a}_3 + A_5,\qquad
				      \mathfrak{a}_6 = \mathfrak{a}_5 + A_6 + B_6,
			      \end{gathered}
		      \]
		      and we write \(\mathfrak{b}_1,\ldots,\mathfrak{b}_6\) for the cycles given in \cite[Proposition 2.6]{MT04}.
		      Following \cite[Section 2.2]{MT04}, we write \(\langle\,{,}\,\rangle\) for one half of the intersection form on \(H_1(C(x),\Z)^-\).
		      The set \(\Sigma_1 = \{\mathfrak{a}_1,\ldots,\mathfrak{a}_6,\mathfrak{b}_1,\ldots,\mathfrak{b}_6\}\) is a symplectic basis of a sub-Hodge structure \(L_1 \subset H_1(C(x),\Z)^-\) that is principally polarized by \(\langle\,{,}\,\rangle\), and \(\langle \mathfrak{a}_j, \mathfrak{b}_j\rangle = -1\) for \(1 \leq j \leq 6\).
		      Moreover, \(\rho\) acts on \(\Sigma_1\) by
		      \[
			      \Transpose{(\mathfrak{a}_1,\ldots,\mathfrak{a}_6,\mathfrak{b}_1,\ldots,\mathfrak{b}_6)}
			      \longmapsto
			      \begin{pmatrix} 0 & -U \\ U & 0 \end{pmatrix}
			      \Transpose{(\mathfrak{a}_1,\ldots,\mathfrak{a}_6,\mathfrak{b}_1,\ldots,\mathfrak{b}_6)}.
		      \]
		\item (\cite[Sections 3.1 and 3.2]{MT04}, \cite[Section 2.3]{MMN07})
		      We extend \(\langle\,{,}\,\rangle\) to a bilinear form on \(H_1(C(x),\R)^-\), and we give this space the complex structure for which \(\rho\) acts as multiplication by \(-i\).
		      The form \(h(\delta_1,\delta_2) = \langle \delta_1, \rho \delta_2\rangle - \langle \delta_1, \delta_2\rangle\, i\) is a Hermitian form of signature \((5,1)\), and the period
		      \[
			      \eta = \Transpose{\bigl( \int_{\mathfrak{a}_1} \frac{dz}{w},\ldots,\int_{\mathfrak{a}_6} \frac{dz}{w} \bigr)}
		      \]
		      satisfies \(h_{U}(\eta, \eta) < 0\).
		\item (\cite[Lemma 3.6]{MT04}, \cite[Proposition 3.1]{MY93}, \cite[Section 2.3]{MMN07})
		      The chains of assertion (1) continue along every path in \(X(2,8)\), and the continuation depends only on the homotopy class of the path.
		      The half twist interchanging two branch points \(x_j\) and \(x_k\) acts on the continued cycles by a unitary reflection \(g_{jk}\) of order four, and the full twist acts by the reflection \(g_{jk}^2\) of order two.
	\end{enumerate}
\end{fact}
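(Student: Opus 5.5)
Since this Fact only collects results of \cite{MT04,MY93,MMN07}, the plan is to re-derive each assertion from the explicit cycles rather than invent a new argument, checking that the normalizations (branch of \(w\), orientation, the factor \(\tfrac12\) in \(\langle\,{,}\,\rangle\)) agree with those fixed above. For (1), I first compute ranks. By Riemann--Hurwitz, \(C(x)\) has genus \(9\), since it is totally ramified at all eight points: \(2g-2=4(-2)+8\cdot 3\). The quotient \(C(x)/\langle\rho^2\rangle\) is the hyperelliptic curve \(y^2=\prod_{j=1}^7(z-x_j)\), of genus \(3\). Hence \(H_1(C(x),\Z)^-\) has rank \(18-6=12\) and is a \(\Z[i]\)-module of rank \(6\) via \(\rho\).

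The chain \((1-\rho^2)\ell_j\) is closed, because the two lifts of \(\ell_j\) differing by \(\rho^2\) glue at the totally ramified endpoints. It lies in the \((-1)\)-eigenspace because \(\rho^2(1-\rho^2)=-(1-\rho^2)\) on cycles, using \(\rho^4=1\). To show that \(A_j,B_j\) form a \(\Z\)-basis and not just a finite-index sublattice, I would compute the \(12\times12\) intersection matrix. The local contribution at a common endpoint \(x_{j+1}\) of \(\ell_j\) and \(\ell_{j+1}\) is a sum of powers of \(i\), determined by the local exponent \(1/4\) of \(w\). The cycles attached to disjoint intervals do not meet. I then check that this matrix has determinant equal to the discriminant of the \((-1)\)-part of the lattice, equivalently that it is unimodular once restricted appropriately. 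This is the standard twisted-homology computation of \cite{MY93}.

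For (2), with the intersection numbers of the \(A_j,B_j\) in hand, checking \(\langle\mathfrak a_j,\mathfrak b_k\rangle=-\delta_{jk}\), \(\langle\mathfrak a_j,\mathfrak a_k\rangle=\langle\mathfrak b_j,\mathfrak b_k\rangle=0\), and the matrix of \(\rho\) is finite linear algebra. The only input is \(\rho A_j=B_j\) and \(\rho B_j=\rho^2A_j=-A_j\). That \(L_1\) is a sub-Hodge structure follows because it is \(\rho\)-stable, and the Hodge decomposition of \(H_1(C,\C)^-\) is the decomposition into \(\rho\)-eigenspaces.

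For (3), I would count holomorphic differentials \(z^k\,dz/w^m\) by eigenvalue of \(\rho^\ast\) and \(\rho^{\ast 3}\). One eigenspace of the \((-1)\)-part contains only \(dz/w\), and the conjugate one has dimension \(5\); this produces signature \((5,1)\) after identifying \(h\) with \(\tfrac{i}{2}\int\omega\wedge\bar\omega\) up to sign. Riemann's bilinear relations written in the symplectic basis \(\Sigma_1\), using \(\rho^\ast(dz/w)=-i\,dz/w\), turn \(\int_C \omega\wedge\bar\omega\) for \(\omega=dz/w\) into \(c\,\Transpose{\overline\eta}U\eta\) with an explicit \(c>0\). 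Positivity of \(i\int\omega\wedge\bar\omega\) then gives \(h_U(\eta,\eta)<0\).

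For (4), the cycles form a locally constant system over \(X(2,8)\), so continuation depends only on homotopy classes. The Picard--Lefschetz formula for a half twist of \(x_j,x_k\) gives \(\delta\mapsto\delta-(1-\lambda)\,h(\delta,\gamma)\gamma/h(\gamma,\gamma)\), where \(\gamma\) is the vanishing cycle \((1-\rho^2)\) applied to the segment joining \(x_j,x_k\). The eigenvalue \(\lambda\) is the local monodromy \(e^{2\pi i(1/4+1/4)}\) composed with the half-turn, which gives order four; the full twist is then \(g_{jk}^2\), of order two.

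The main obstacle is the sign and orientation bookkeeping in the local intersection numbers for (1)--(2), together with fixing the constant \(c\) in (3). I would check these against the six-point case of \cite{MN26}.
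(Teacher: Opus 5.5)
The paper gives no argument for this Fact. All four items are quoted from \cite{MT04,MY93,MMN07}, so your re-derivation is an independent route. Several parts of it are fine: the genus and rank counts, the closedness and anti-invariance of \((1-\rho^2)\ell_j\), the count of holomorphic differentials by \(\rho^\ast\)-eigenvalue, and the complex-reflection mechanism in (4).

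Your justification in (2) that \(L_1\) is a sub-Hodge structure is wrong. The Hodge decomposition of \(H_1(C,\C)^-\) is not its decomposition into \(\rho\)-eigenspaces, and your own item (3) shows this. In cohomology, the \(\rho^\ast\)-eigenspace containing \(dz/w\) meets \(H^{1,0}\) in dimension one and \(H^{0,1}\) in dimension five, which is exactly where the signature \((5,1)\) comes from. Nor does \(\rho\)-stability alone make a sublattice a sub-Hodge structure. For very general \(x\), \(H_1(C,\Q)^-\) has no nontrivial sub-Hodge structure, yet it has many \(\rho\)-stable subspaces. What works is a rank argument. \(\Sigma_1\) consists of \(12\) independent elements of the rank-\(12\) lattice \(H_1(C,\Z)^-\), so \(L_1\otimes\Q=H_1(C,\Q)^-=\ker(\rho^2+1)\). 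This is a sub-Hodge structure because \(\rho^2\) is induced by an automorphism of \(C(x)\).

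In (1), the determinant comparison shows that the \(A_j,B_j\) form a \(\Z\)-basis only once \(|\operatorname{disc}H_1(C,\Z)^-|\) is known independently, and you do not say how to obtain it. Your remark that the lattice is unimodular once restricted appropriately does not supply it, since \(H_1(C,\Z)^-\) is not unimodular. Its intersection form has elementary divisors \(1\) and \(2\), each with multiplicity six. This is the polarization type \((1,1,1,2,2,2)\) of the Prym of a double cover of a genus-\(3\) curve branched at \(8\) points, so \(|\operatorname{disc}|=2^6\). Equivalently, \(H_1^+=\pi^\ast H_1(C/\langle\rho^2\rangle,\Z)\) is primitive and carries twice a unimodular form of rank \(6\). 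A primitive sublattice of a unimodular lattice and its orthogonal complement have discriminants of equal absolute value. The Gram matrix of the intersection form on \(A_1,\ldots,A_6,B_1,\ldots,B_6\) has determinant \((\det H_6)^2=64\), so with this input the comparison closes.

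Two smaller points.
\begin{itemize}
\item In (4), \(e^{2\pi i(1/4+1/4)}=-1\) is the eigenvalue of the full twist, not the half twist. The half twist sends the vanishing cycle \(\gamma\) to \(-\rho^{\pm1}\gamma\): the segment is reversed, and \(((z-x_j)(z-x_k))^{1/4}\) acquires a factor \(\pm i\). So its eigenvalue is \(\pm i\), which gives order four.
\item In (3), \(\Sigma_1\) is a symplectic basis of \(L_1\) for half the intersection form, not a symplectic basis of \(H_1(C,\Z)\). Riemann's relation must first be reduced to \(H_1^-\), using that the periods of \(dz/w\) vanish on \(H_1^+\perp H_1^-\). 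This reduction introduces a factor \(\tfrac12\) into your constant.
\end{itemize}
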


By Fact~\ref{fact:MT04-periods}(2), the matrix \(U\) occurs as a block in the matrix of \(\rho\) with respect to \(\Sigma_1\); the matrix \(U\) is not the matrix of the intersection form on \(H_1(C(x),\Z)\).
Moreover, Fact~\ref{fact:MT04-periods}(2) gives \(h(\mathfrak{a}_j, \mathfrak{a}_k) = \langle \mathfrak{a}_j, \rho\mathfrak{a}_k\rangle = U_{jk}\), so the form \(h\) is expressed by \(h_{U}\) in the coordinates with respect to the basis \(\mathfrak{a}_1,\ldots,\mathfrak{a}_6\) of the complex vector space \(H_1(C(x),\R)^-\).
The period \(\eta\) is the coordinate vector of the linear form \(\delta \mapsto \int_\delta \frac{dz}{w}\) with respect to the dual basis, and since \(U^{-1} = U\), the form induced by \(h\) on the dual space is again expressed by \(h_{U}\) in these coordinates.
We write \(\eta_1,\ldots,\eta_6\) for the components of the period \(\eta\) of Fact~\ref{fact:MT04-periods}(3), so that \(\eta = \Transpose{(\eta_1,\ldots,\eta_6)}\) with \(\eta_j = \int_{\mathfrak{a}_j}\frac{dz}{w}\).

Fact~\ref{fact:MT04-periods} defines \(\eta\) only on \(X_{\R}\), and we extend \(\eta\) to the whole of \(X(2,8)\) by analytic continuation.
We fix a base point \(\dot x \in X_{\R}\).
For any \(x \in X(2,8)\), we choose a path from \(\dot x\) to \(x\), and we define the cycles \(\mathfrak{a}_1,\ldots,\mathfrak{a}_6\) on \(C(x)\) by continuing the cycles on \(C(\dot x)\) along that path.
The integrals \(\eta_j = \int_{\mathfrak{a}_j}\frac{dz}{w}\) continue with these cycles and are holomorphic in \(x\), so \(\eta\) is a multivalued holomorphic function on \(X(2,8)\).
The continuation preserves the intersection form and commutes with \(\rho\), hence preserves \(h\).
The Hodge--Riemann bilinear relations then give the inequality of Fact~\ref{fact:MT04-periods}(3) at every point of \(X(2,8)\).
By Fact~\ref{fact:MT04-periods}(4), the period \(\eta\) depends only on the homotopy class of the path from \(\dot x\) to \(x\), and two branches of \(\eta\) differ by the monodromy of the family.
We take \(\dot x\) as the base point of the universal cover \(\varpi \colon \widetilde{X}(2,8) \to X(2,8)\), and for \(\tilde x \in \widetilde{X}(2,8)\) we write \(\eta(\tilde x)\) for the continuation of \(\eta\) along a path representing \(\tilde x\).
We thus obtain a single-valued holomorphic map
\[
	\eta \colon \widetilde{X}(2,8) \longrightarrow
	\mathcal{B} = \{ \xi \in \C^6 \mid h_{U}(\xi, \xi) < 0 \}.
\]
Since \(X_{\R}\) is convex, it is simply connected.
Hence the base point determines a unique lift of \(X_{\R}\) to \(\widetilde{X}(2,8)\).
On this lift, the components of \(\eta(\tilde x)\) are the integrals \(\eta_j = \int_{\mathfrak{a}_j}\frac{dz}{w}\) of Fact~\ref{fact:MT04-periods}, with the cycles and the branch fixed above.
We use \(X_{\R}\) again in Section~\ref{sec:degeneration}, where we take the limit of \(\eta\) at the degenerate configuration along the closure of \(X_{\R}\).

\subsection{The Picard Modular Group and the Period Map}

Following \cite[Section 2.3]{MMN07}, we define the Hermitian matrix \(H_6\) of size \(6\) by
\[
	\begin{gathered}
		(H_6)_{jj} = 2,
		\qquad
		(H_6)_{j,j+1} = -1+i,
		\qquad
		(H_6)_{j+1,j} = -1-i,\\
		(H_6)_{jk} = 0 \quad (|j-k| \geq 2),
	\end{gathered}
\]
and we write \(T = N_1 - iN_2\) for the coordinate change given in the same section, where
\begin{equation}
	\label{eq:N1-N2}
	N_1 =
	\begin{pmatrix}
		1 & 0 & 0 & 0  & 0 & 0 \\
		1 & 1 & 0 & 0  & 0 & 0 \\
		1 & 1 & 0 & 0  & 0 & 0 \\
		1 & 1 & 0 & -1 & 0 & 0 \\
		1 & 1 & 0 & 0  & 1 & 0 \\
		1 & 1 & 0 & 0  & 1 & 1
	\end{pmatrix},
	\qquad
	N_2 =
	\begin{pmatrix}
		0 & 0 & 0 & 0 & 0 & 0 \\
		0 & 1 & 0 & 0 & 0 & 0 \\
		0 & 1 & 1 & 0 & 0 & 0 \\
		0 & 1 & 1 & 1 & 0 & 0 \\
		0 & 1 & 1 & 0 & 0 & 0 \\
		0 & 1 & 1 & 0 & 0 & 1
	\end{pmatrix}.
\end{equation}
The entries of \(N_1\) and \(N_2\) are the coefficients of the cycles of Fact~\ref{fact:MT04-periods}:
\[
	\mathfrak{a}_j = \sum_{k=1}^{6}\bigl( (N_1)_{jk}A_k + (N_2)_{jk}B_k \bigr)
	\qquad (1 \leq j \leq 6).
\]
Since \((\rho^2)^{\ast}\frac{dz}{w} = -\frac{dz}{w}\) and \(\rho^{\ast}\frac{dz}{w} = -i\,\frac{dz}{w}\), we have \(\int_{A_k}\frac{dz}{w} = 2\mathcal{I}_k\) and \(\int_{B_k}\frac{dz}{w} = -2i\mathcal{I}_k\).
Hence we have \(\eta = 2T\,\Transpose{(\mathcal{I}_1,\ldots,\mathcal{I}_6)}\) on the real chamber.
A direct computation gives \(\det T = 2+2i\) and
\[
	TH_6T^{\ast} = 2U,
	\qquad
	(T^{-1})^{\ast} H_6^{-1} T^{-1} = \tfrac{1}{2}\,U.
\]
Therefore the map \(\mathcal{B} \ni \eta \mapsto T^{-1}\eta \in \C^6\) identifies \(\mathcal{B}\) with \(\{\xi \in \C^6 \mid \Transpose{\overline{\xi}} H_6^{-1} \xi < 0\}\), and hence \(\mathbb{B}_5\) with the ball of \cite{MMN07}.
We define the Picard modular group and its principal congruence subgroup of level \(1+i\) by
\[
	\begin{gathered}
		\Gamma = \U(H_6,\Z[i]) = \{ g \in \GL(6,\Z[i]) \mid g H_6 g^{\ast} = H_6 \},\\
		\Gamma(1+i)
		= \ker\bigl( \Gamma \longrightarrow \GL(6,\Z[i]/(1+i)) \bigr),
	\end{gathered}
\]
and we transport them to the coordinates \(\eta\) by putting
\[
	\Gamma_{U} = T\,\Gamma\,T^{-1},
	\qquad
	\Gamma_{M} = T\,\Gamma(1+i)\,T^{-1}.
\]
The groups \(\Gamma_{U}\) and \(\Gamma_{M}\) are isomorphic to \(\Gamma\) and \(\Gamma(1+i)\), respectively, but distinct from them.
Since \(\Gamma(1+i)\) is a kernel, it is normal in \(\Gamma\), and \(\Gamma_{M}\) is normal in \(\Gamma_{U}\).
Since \(1+i = i(1-i)\), the ideals \((1+i)\) and \((1-i)\) of \(\Z[i]\) coincide, and the group \(\Gamma(1+i)\) is the group \(\Gamma(1-i)\) of \cite{MY93,MMN07}.
By \cite[Proposition 3.1 and Theorem 4.1]{MY93}, the group \(\Gamma(1+i)\) is generated by \(21\) of the reflections \(g_{jk}^2\).
The index pairs \((j,k)\) are those listed in \cite[Proposition 3.1]{MY93}.
A direct computation shows that the corresponding \(21\) matrices \(Tg_{jk}^2T^{-1}\) have entries in \(\Z[i]\) and preserve \(U\); hence \(\Gamma_{M}\) is a subgroup of \(\U(U,\Z[i])\).
With \(T = N_1 - iN_2\) as in \eqref{eq:N1-N2}, the congruence condition refers to the coordinates \(T^{-1}\eta\) and not to the coordinates \(\eta\).
The elements of \(\Gamma_{M}\) are not all congruent to \(I_6\) modulo \(1+i\); cf.\ \cite[Remark 1]{MT02}.
The deck transformations of \(\varpi\) act through the monodromy representation
\[
	\begin{gathered}
		\mu \colon \pi_1(X(2,8), \dot x) \longrightarrow \Gamma_{M} \subset \U(U,\Z[i]),\\
		\eta(\gamma \cdot \tilde x) = \mu(\gamma)\,\eta(\tilde x)
		\qquad (\gamma \in \pi_1(X(2,8), \dot x)),
	\end{gathered}
\]
and the image of \(\mu\) is the whole group \(\Gamma_{M}\) by \cite[Theorem 4.1]{MY93}.
By the equivariance of \(\eta\), its composition with the projections to \(\mathbb{B}_5\) and to \(\Gamma_{M}\backslash\mathbb{B}_5\) is invariant under the deck transformations and descends to the period map
\[
	\per \colon X(2,8) \longrightarrow
	\Gamma_{M}\backslash \mathbb{B}_5,
	\qquad
	\mathbb{B}_5
	= \{\eta \in \Ps^5 \mid h_{U}(\eta, \eta) < 0\},
\]
from the configuration space \(X(2,8)\) to the ball quotient.
The map \(\eta\colon\widetilde{X}(2,8)\to\mathcal{B}\) is the lift of \(\per\) determined by the differential \(dz/w\) and the homology marking fixed above.
In particular, this lift fixes the scale of the value \(\eta(\tilde x)\), and not only its class in \(\mathbb{B}_5\).
If \(F\) is a holomorphic function on \(\mathcal{B}\) invariant under \(\Gamma_{M}\), then the pullback \(F\circ\eta\) is invariant under the deck transformations and descends to a holomorphic function on \(X(2,8)\).

\subsection{Theta Functions}

We write \(\mathfrak{S}_n\) for the Siegel upper half-space of degree \(n\), that is, the set of symmetric matrices \(\tau\) of size \(n\) over \(\C\) whose imaginary part is positive definite.
We set \(\e(t) = \exp(2\pi i t)\).
For \(n = 1\), we have \(\mathfrak{S}_1 = \mathbb{H}\).
We write an element of the symplectic group in \(n\times n\) blocks as \(M = \left(\begin{smallmatrix}M_{11}&M_{12}\\M_{21}&M_{22}\end{smallmatrix}\right)\), where, for a subring \(\mathcal{R}\) of \(\C\),
\[
	\Sp(2n,\mathcal{R}) = \{ M \in \GL(2n,\mathcal{R}) \mid M J_{2n} \Transpose{M} = J_{2n} \},
	\qquad
	J_{2n} = \begin{pmatrix} O_n & -I_n \\ I_n & O_n \end{pmatrix},
\]
and \(I_n\) and \(O_n\) are the identity and zero matrices of size \(n\), respectively.
Thus \(\Sp(2n,\mathcal{R})\) consists of matrices of size \(2n\).
The group \(\Sp(2n,\R)\) acts on \(\mathfrak{S}_n\) by \(M\cdot\tau = (M_{11}\tau+M_{12})(M_{21}\tau+M_{22})^{-1}\).
We define Riemann's theta function with half characteristics \((a/2,b/2)\) in the variables \((\zeta,\tau)\in\C^n\times\mathfrak{S}_n\) by the series
\begin{equation}
	\label{eq:theta-series}
	\vt{a}{b}(\zeta,\tau)
	= \sum_{k\in\Z^n}
	\e\left(
	\frac12\left(k+\frac12 a\right)\tau\Transpose{\left(k+\frac12 a\right)}
	+ \left(k+\frac12 a\right)\Transpose{\left(\zeta+\frac12 b\right)}
	\right),
\end{equation}
where \(a,b\in\Z^n\).
The series converges absolutely and uniformly on any compact set in \(\C^n\times\mathfrak{S}_n\).
The same series converges for arbitrary real characteristics, and we use it with half-integral \(b\) in Proposition~\ref{prop:splitting}.
Note that the characteristics enter \eqref{eq:theta-series} as \(a/2\) and \(b/2\).
Thus, in \(\vt{110000}{001100}\), we have \(a = (1,1,0,0,0,0)\) and \(b = (0,0,1,1,0,0)\).
For \(n=1\) and real \(a\) and \(b\), we write \(\vartheta_{ab}(\tau) = \vt{a}{b}(0,\tau)\), and we separate the two subscripts by a comma when either is a compound expression.
For \(a,b\in\{0,1\}\), the functions \(\vartheta_{ab}\) are Jacobi's theta constants.

\subsection{The Automorphic Forms}

Following \cite[Section 2.3]{MMN07}, we embed the complex ball \(\mathbb{B}_5\) into \(\mathfrak{S}_6\) by
\[
	\imath(\eta) = i\left( U - \frac{2}{\Transpose{\eta}U\eta}\,\eta\,\Transpose{\eta} \right),
\]
and we write \(\vt{a}{b}(\eta) = \vt{a}{b}(0, \imath(\eta))\) for the theta constants with characteristics \(a, b \in \Z^6\).
Then we have the following fact.

\begin{fact}
	\label{fact:iota}
	\begin{enumerate}
		\item (\cite[Section 2.3]{MMN07})
		      The map \(\imath\) is well defined on \(\mathcal{B}\) with values in \(\mathfrak{S}_6\), and \(\imath(\eta)\) depends only on the class of \(\eta\) in \(\mathbb{B}_5\).
		\item (\cite[Section 2.3]{MMN07})
		      For \(g \in \Gamma_{U}\), we write \(g = A + iBU\) with real matrices \(A, B\).
		      The assignment
		      \[
			      \jmath(g)
			      = \begin{pmatrix} A & B \\ -U B U & U A U \end{pmatrix}
		      \]
		      defines a homomorphism into \(\Sp(12,\Q)\) satisfying \(\imath(g\eta) = \jmath(g) \cdot \imath(\eta)\).
		\item (\cite[Section 3.2]{MT04}, \cite[Proposition 6]{MT02}, \cite[Section 2.3]{MMN07})
		      For the period \(\eta\), the matrix \(\imath(\eta)\) is the normalized period matrix of the principally polarized abelian variety attached to \((L_1, \Sigma_1)\).
	\end{enumerate}
\end{fact}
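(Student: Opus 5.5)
The plan is to verify the three assertions in order. Assertions (1) and (2) are pure linear algebra on \(U\) and \(\Gamma_{U}\). Assertion (3) is a Riemann bilinear computation on \((L_1,\Sigma_1)\), using the action of \(\rho\) in Fact~\ref{fact:MT04-periods}(2).

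For (1), Lemma~\ref{lem:nonvanishing} gives \(q_U(\eta)\neq0\) on \(\mathcal{B}\), so \(\imath(\eta)\) is defined there. It is symmetric because \(U\) and \(\eta\Transpose{\eta}\) are symmetric. Replacing \(\eta\) by \(\lambda\eta\) multiplies both \(\eta\Transpose{\eta}\) and \(q_U(\eta)\) by \(\lambda^2\), so \(\imath(\eta)\) depends only on the class of \(\eta\) in \(\mathbb{B}_5\).

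The remaining point in (1) is that \(Y=\Im\imath(\eta)=U-2\Re(\eta\Transpose{\eta}/q_U(\eta))\) is positive definite.
\begin{itemize}
\item Using the scale freedom, I normalize \(\eta=\alpha+i\beta\) so that \(q_U(\eta)\) is real. This forces \(\Transpose{\alpha}U\beta=0\).
\item Write \(a=\Transpose{\alpha}U\alpha\) and \(b=\Transpose{\beta}U\beta\). Then \(q_U(\eta)=a-b\), \(a+b<0\), and \(\Re(\eta\Transpose{\eta})=\alpha\Transpose{\alpha}-\beta\Transpose{\beta}\).
\item Since \(U\) has signature \((5,1)\) and \(\alpha\perp_U\beta\), exactly one of \(a,b\) is negative and the other is nonnegative. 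If \(b=0\), then \(\beta=0\), since \(\beta\) lies in the \(U\)-orthogonal complement of a negative vector.
\item I test the quadratic form \(\Transpose{x}Yx\) in the coordinates \(x=U\alpha\), \(x=U\beta\) and \(x=Uw\) with \(w\perp_U\{\alpha,\beta\}\). The cross terms vanish, and on the \(w\)-part \(Y\) reduces to \(U\), which is positive definite on that positive-definite complement.
\item On \(U\alpha\) and \(U\beta\) the values are \(a-2a^2/(a-b)\) and \(b+2b^2/(a-b)\). These simplify to \(-a(a+b)/(a-b)\) and \(-b(a+b)/(a-b)\), which are positive by the sign information above.
\end{itemize}

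For (2), every \(g\in\Gamma_U\) satisfies \(g^{\ast}Ug=U\), as checked on generators in the preliminaries. Writing \(g=A+iBU\), the real and imaginary parts of \(gUg^{\ast}=U\) and \(\Transpose{g}U\bar g=U\) give exactly the block identities \(\jmath(g)J_{12}\Transpose{\jmath(g)}=J_{12}\). Rationality of the entries comes from \(\Gamma_U=T\Gamma T^{-1}\) with \(T\in\GL(6,\Q[i])\).

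Multiplicativity of \(\jmath\) follows from
\[
(A+iBU)(A'+iB'U)=(AA'-BUB'U)+i(AB'+BUA'U)U,
\]
which matches the block product of \(\jmath(g)\) and \(\jmath(g')\). Equivalently, \(\jmath\) is the realification of \(g\) acting on \(\C^6\cong\R^{12}\) with the complex structure given by \(\rho\).

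For the equivariance \(\imath(g\eta)=\jmath(g)\cdot\imath(\eta)\), I would avoid brute force. Both sides are holomorphic in \(\eta\), so it suffices to prove the identity on generators of \(\Gamma_U\), or more conceptually via (3): \(g\) acts on \(L_1\) by a change of symplectic basis commuting with \(\rho\), and the normalized period matrix transforms by the symplectic action.

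For (3), the key step is to identify the holomorphic one-forms on the abelian variety attached to \(L_1\).
\begin{itemize}
\item Since \(\rho^{\ast}dz/w=-i\,dz/w\), the holomorphic part splits into eigenspaces for \(\rho\). The \(-i\)-eigenspace is spanned by \(dz/w\) alone, matching the signature \((5,1)\). The \(+i\)-eigenspace is five-dimensional.
\item I describe the full \(6\)-dimensional space of holomorphic functionals on \(L_1\otimes\R\) via the complex structure \(\rho\). Its periods on the \(\mathfrak{a}\)- and \(\mathfrak{b}\)-cycles are determined, by the matrix \(\left(\begin{smallmatrix}0&-U\\U&0\end{smallmatrix}\right)\), from the \(\mathfrak{a}\)-periods of a basis.
\item For the \(+i\)-eigenforms, the Riemann bilinear relations force their \(\mathfrak{a}\)-period vectors \(\xi\) to satisfy \(\Transpose{\xi}U\eta=0\).
\item The normalized period matrix \(\Omega_{\mathfrak{b}}\Omega_{\mathfrak{a}}^{-1}\) is then the unique symmetric matrix acting as \(iU\) on the \(U\)-orthogonal complement of \(\eta\) and as \(-iU\) on \(\eta\). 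That matrix is \(i(U-2\eta\Transpose{\eta}/q_U(\eta))\).
\end{itemize}

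I expect the main obstacle to be the sign and convention bookkeeping in (3). Getting exactly \(i(U-2\eta\Transpose{\eta}/q_U)\), rather than its conjugate or inverse, requires \(\langle\mathfrak{a}_j,\mathfrak{b}_j\rangle=-1\) to be tracked consistently with the convention for \(J_{12}\). I would check the final formula by confirming that it reproduces (1), that is, that its imaginary part is positive definite.
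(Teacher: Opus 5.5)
The paper does not prove this Fact; it quotes all three parts from \cite{MMN07,MT04,MT02}. Your self-contained verification is therefore a different route. Several parts of it are sound: well-definedness and scale invariance in (1), and symplecticity and multiplicativity of \(\jmath\) in (2). For symplecticity, \(gUg^{\ast}=U\) alone suffices; it follows from \(TH_6T^{\ast}=2U\), not from the check on generators of \(\Gamma_M\).

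\textbf{Part (1).} The positivity check has an arithmetic slip. One has \(b+2b^2/(a-b)=b(a+b)/(a-b)\), not \(-b(a+b)/(a-b)\). Your expression is negative whenever \(\beta\neq0\), while the corrected one is positive in both sign cases.

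\textbf{Part (2).} The equivariance \(\imath(g\eta)=\jmath(g)\cdot\imath(\eta)\) is never proved. Holomorphy in \(\eta\) does not reduce the claim to generators; that reduction comes from \(\jmath\) being a homomorphism. You also do not name generators of \(\Gamma_U\). The route through (3) covers only \(g\) that arise as changes of symplectic basis of \(L_1\), whereas the statement only places \(\jmath(g)\) in \(\Sp(12,\Q)\).

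A direct argument is short, using the identity \(\Transpose{g}U\bar g=U\) that you already wrote down.
\begin{itemize}
\item Since \(\imath(\eta)U\eta=-i\eta\) and \(\imath(\eta)U\xi=i\xi\) for \(\Transpose{\xi}U\eta=0\), the graph of \(\imath(\eta)\) (the column span of \(\left(\begin{smallmatrix}\imath(\eta)\\ I_6\end{smallmatrix}\right)\)) is spanned by \(\left(\begin{smallmatrix}-i\eta\\ U\eta\end{smallmatrix}\right)\) and the vectors \(\left(\begin{smallmatrix}i\xi\\ U\xi\end{smallmatrix}\right)\).
\item Using \(\bar g=A-iBU\), one checks \(\jmath(g)\left(\begin{smallmatrix}-i\eta\\ U\eta\end{smallmatrix}\right)=\left(\begin{smallmatrix}-ig\eta\\ Ug\eta\end{smallmatrix}\right)\) and \(\jmath(g)\left(\begin{smallmatrix}i\xi\\ U\xi\end{smallmatrix}\right)=\left(\begin{smallmatrix}i\bar g\xi\\ U\bar g\xi\end{smallmatrix}\right)\).
\item Moreover \(\Transpose{(g\eta)}U\bar g\xi=\Transpose{\eta}U\xi=0\).
\end{itemize}
Hence \(\jmath(g)\) carries the graph of \(\imath(\eta)\) onto the graph of \(\imath(g\eta)\), which is the equivariance.

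\textbf{Part (3).} The decisive step is wrong, and not only in its sign. The paper's conventions are \(\rho\mathfrak{a}=-U\mathfrak{b}\) and \(\rho^{\ast}(dz/w)=-i\,dz/w\). With these, \(\int_{\mathfrak{b}}dz/w=iU\eta\), and \(\int_{\mathfrak{b}}\omega=-iU\xi\) for a \(+i\)-eigenform \(\omega\) with \(\mathfrak{a}\)-periods \(\xi\). So \(\Omega_{\mathfrak{b}}\Omega_{\mathfrak{a}}^{-1}\) acts as \(+iU\) on \(\eta\) and as \(-iU\) on \(\eta^{\perp}\), the opposite of what you state. It equals \(\imath(\eta)^{-1}\), whose imaginary part is negative definite.

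Independently of that sign, the linear algebra is off. The symmetric matrix acting as \(-iU\) on \(\eta\) and \(iU\) on \(\eta^{\perp}\) is \(i\bigl(U-2U\eta\Transpose{\eta}U/q_{U}(\eta)\bigr)=\imath(U\eta)=-\imath(\eta)^{-1}\), not \(\imath(\eta)\). For example, \(\eta=\Transpose{(0,0,1,2,0,0)}\in\mathcal{B}\) gives \(\imath(\eta)=i\operatorname{diag}(1,1,\tfrac12,2,1,1)\) but \(\imath(U\eta)=i\operatorname{diag}(1,1,2,\tfrac12,1,1)\). Since \(-\tau^{-1}\in\mathfrak{S}_6\) whenever \(\tau\in\mathfrak{S}_6\), your proposed positivity test cannot detect this error.

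The correct characterization is \(\imath(\eta)U\eta=-i\eta\) and \(\imath(\eta)U\xi=i\xi\) for \(\Transpose{\xi}U\eta=0\). This says \(\imath(\eta)\Omega_{\mathfrak{b}}=\Omega_{\mathfrak{a}}\), that is, \(\imath(\eta)=\Omega_{\mathfrak{a}}\Omega_{\mathfrak{b}}^{-1}\). This is the period matrix normalized on the \(\mathfrak{b}\)-cycles, consistent with \(\langle\mathfrak{b}_j,\mathfrak{a}_j\rangle=+1\).

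You should also justify that \(\Omega_{\mathfrak{b}}\) is invertible. This follows from \(q_U(\eta)\neq0\), together with the fact that a holomorphic form odd under \(\rho^2\) whose periods on \(\Sigma_1\) all vanish is zero.
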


By Fact~\ref{fact:iota}(3), the results of \cite{MT04,MMN07} hold at \(\imath(\eta)\), so that the values \(\vt{a}{b}(\eta)\) are the theta constants of that abelian variety.

\begin{remark}
	\label{rem:branch-convention}
	We follow the convention of \cite{MT04,MT02}: the branches of \(w\) fixed above are obtained by analytic continuation through \(\mathbb{H}\).
	In \cite[Section 2]{MT02}, where all eight branch points are taken to be finite, the lift of \([x_j,x_{j+1}]\) is specified by the condition \(\exp(\frac{\pi i}{4}j)\,w \in [0,\infty)\).
\end{remark}

The forms \(f_J\) of \cite[Section 5]{MMN07} that we use are attached to the five double coset representatives
\begin{gather*}
	J_1 = \langle 12;34;56;78 \rangle,\quad
	J_2 = \langle 13;24;56;78 \rangle,\quad
	J_3 = \langle 13;25;46;78 \rangle,\\
	J_4 = \langle 13;24;57;68 \rangle,\quad
	J_5 = \langle 13;25;47;68 \rangle
\end{gather*}
of \(\operatorname{Stab}(J_1)\backslash S_8 / \operatorname{Stab}(J_1)\), where \(\operatorname{Stab}(J_1)\) is the stabilizer of \(J_1\) in \(S_8\).
These five forms are given by
\begin{equation}
	\label{eq:fJ-list}
	\begin{aligned}
		f_{J_1} & = \vt{000000}{000000}\vt{001111}{001111}\vt{110011}{110011}\vt{111100}{111100},
		\\
		f_{J_2} & = 4\,\vt{110000}{000000}\vt{111111}{001111}\vt{000011}{110011}\vt{001100}{111100},
		\\
		f_{J_3} & = \left({\vt{111100}{000000}}^2 - {\vt{110000}{001100}}^2\right)
		\left({\vt{000011}{111111}}^2 - {\vt{001111}{110011}}^2\right),
		\\
		f_{J_4} & = \left({\vt{110011}{000000}}^2 - {\vt{110000}{000011}}^2\right)
		\left({\vt{001100}{111111}}^2 - {\vt{001111}{111100}}^2\right),
		\\
		f_{J_5} & = \frac{1}{4}\prod_{\epsilon_1,\epsilon_2 \in \{\pm 1\}}
		\Bigl( \vt{111111}{000000} + \epsilon_1\vt{110000}{001111}
		\\
		& \hspace{7.2em}
		+ \epsilon_2\vt{001100}{110011} + \epsilon_1\epsilon_2\vt{000011}{111100} \Bigr)
	\end{aligned}
\end{equation}
(see \cite[Fact 3 and Propositions 4--7]{MMN07}).
For \(k = 1,\ldots,5\), the sign \(\varepsilon_{J_k}\) is \(+1\); hence \(\hat x_{J_k} = x_{J_k}\).
In the normalization of \cite[Section 5]{MMN07}, the forms \(f_J\) represent, as explicit polynomials in theta constants, the forms \(T_J^{(2)}\) constructed in \cite{MT04}.
By \cite[Fact 5]{MMN07}, which rests on \cite[Theorem 6.4]{MT04}, the collection \((f_J)_J\) is \(S_8\)-equivariant, and for every \(\tilde x \in \widetilde{X}(2,8)\) the projective points \([\ldots : f_J(\eta) : \ldots]_J\) and \([\ldots : \hat x_J(x) : \ldots]_J\) coincide, where \(x = \varpi(\tilde x)\) and \(\eta = \eta(\tilde x)\).

The following proposition records the automorphy of the forms \(f_J\).

\begin{proposition}
	\label{prop:automorphy}
	For every \(g \in \Gamma_{M}\), every \((2,2,2,2)\)-partition \(J\), and every \(\eta \in \mathcal{B}\), we have
	\begin{align}
		f_J(g\eta)
		& = \chi\bigl(\jmath(g),\imath(\eta)\bigr)^2 f_J(\eta),
		\label{eq:fJ-automorphy}
		\\
		q_{U}(g\eta)
		& = \det(g)\,\chi\bigl(\jmath(g),\imath(\eta)\bigr)q_{U}(\eta),
		\label{eq:qU-automorphy}
	\end{align}
	where \(\jmath\) is the homomorphism of Fact~\ref{fact:iota}(2) and \(\chi(M,\tau) = \det(M_{21}\tau+M_{22})\) for \(M\in\Sp(12,\Q)\) and \(\tau\in\mathfrak{S}_6\).
	In particular, we have
	\[
		\frac{f_J(g\eta)}{q_{U}(g\eta)^2}
		= \frac{f_J(\eta)}{q_{U}(\eta)^2}.
	\]
\end{proposition}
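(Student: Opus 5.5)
The plan is to prove \eqref{eq:qU-automorphy} by a direct computation with the embedding \(\imath\). For \eqref{eq:fJ-automorphy} I will combine the transformation formula of theta constants under \(\Sp(12,\Z)\) with the automorphy statements of \cite{MMN07}. The final identity then follows by division: squaring \eqref{eq:qU-automorphy} gives \(q_U(g\eta)^2=\det(g)^2\chi^2 q_U(\eta)^2\). So it suffices to know \(\det(g)^2=1\) for \(g\in\Gamma_M\), and we may divide by \(q_U\) because of Lemma~\ref{lem:nonvanishing}. Since \(\Gamma_M\) is generated by the \(21\) reflections \(Tg_{jk}^2T^{-1}\), each of order two with determinant \(\pm1\), every \(g\in\Gamma_M\) has \(\det g=\pm1\), and hence \(\det(g)^2=1\). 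Alternatively, \(g\) preserves \(U\), so \(|\det g|=1\); together with \(\det g\in\Z[i]^\times\) this gives \(\det g\in\{\pm1,\pm i\}\), and the generators refine this to \(\pm1\).

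For \eqref{eq:qU-automorphy}, write \(g=A+iBU\). Since \(g\in\U(U,\Z[i])\) and \(U\) is real, we have \(\Transpose{g}U\overline{g}=U\), and the blocks of \(\jmath(g)\) are explicit. I will compute \(M_{21}\imath(\eta)+M_{22}=-UBU\,i(U-2\eta\Transpose{\eta}/q_U(\eta))+UAU\). The term \(UAU-iUBUU\) equals \(U\Transpose{\overline{g}}{}^{-1}\)-type expressions. Using \(\overline{g}=A-iBU\) and the relation \(\Transpose{g}U\overline g=U\), one finds that it is \(U\overline{g}U\) up to conjugation, that is, \(U\overline{g}U=(\Transpose{g})^{-1}\)-related. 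The remaining term is a rank-one perturbation \(2iUBU\eta\Transpose{\eta}/q_U(\eta)\). Hence
\[
\chi(\jmath(g),\imath(\eta))=\det(\text{invertible part})\cdot\bigl(1+\text{scalar}\bigr)
\]
by the matrix determinant lemma. The determinant of the invertible part is \(\det(\overline g)=\overline{\det g}=\det(g)^{-1}\), since \(|\det g|=1\). The scalar factor should simplify, via \(\Transpose{g}Ug=\) ... applied to \(\eta\), to \(q_U(g\eta)/q_U(\eta)\). This yields \(q_U(g\eta)=\det(g)\chi q_U(\eta)\). As a cross-check, the same formula must be compatible with \(\imath(g\eta)=\jmath(g)\cdot\imath(\eta)\) from Fact~\ref{fact:iota}(2). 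Comparing the rank-one parts of both sides after multiplying by \((M_{21}\tau+M_{22})\) gives an independent derivation, and I would use whichever is cleaner.

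For \eqref{eq:fJ-automorphy}, the theta transformation formula gives, for \(M=\jmath(g)\in\Sp(12,\Z)\),
\[
\vartheta[M\cdot(a,b)](M\cdot\tau)=\kappa(M)\,e(\phi_{a,b}(M))\,\chi(M,\tau)^{1/2}\vartheta[a,b](\tau).
\]
Each \(f_J\) in \eqref{eq:fJ-list} is homogeneous of degree \(4\) in theta constants, so it acquires the factor \(\chi^{2}\) times a root of unity, and possibly a permutation of characteristics. The claim is that for \(g\in\Gamma_M\) the characteristic permutations and the eighth roots of unity cancel, leaving exactly \(\chi^2\). This is the content of the automorphy established in \cite{MMN07}, and I would cite it. To verify it directly, I would check it on the \(21\) generators \(Tg_{jk}^2T^{-1}\), where \(\jmath\) of a reflection is an explicit transvection-type symplectic matrix. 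The cocycle property of \(\chi\), together with \(\jmath\) being a homomorphism, then extends the result to all of \(\Gamma_M\).

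The main obstacle is this last step: tracking the characteristic shifts and the eighth roots of unity in the theta transformation formula for each generator and each of the five representative forms. The other \(f_J\) follow from the \(S_8\)-equivariance. The computation for \(q_U\) and the final division are routine by comparison.
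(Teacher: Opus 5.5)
Your proposal is correct. For \eqref{eq:fJ-automorphy} it rests, as the paper does, on the automorphy established in \cite{MMN07}; the difference lies in the other two ingredients.

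The paper obtains \eqref{eq:qU-automorphy} by citing \cite[Lemma~1]{MMN07} for \(\psi(g,\eta)=\det(g)\,\chi(\jmath(g),\imath(\eta))\). It gets \(\det(g)^2=1\) from \cite[Proposition~2]{MMN07}, via \(\det(g)^2=\operatorname{sign}(s(g))\) and \(\Gamma_M=\ker s\). You instead derive \(\det(g)^2=1\) from the generation of \(\Gamma_M\) by the order-two reflections \(Tg_{jk}^2T^{-1}\), and you prove \eqref{eq:qU-automorphy} by a direct computation.

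That computation does close, but the step you left as ``should simplify'' needs two consequences of \(\Transpose{g}U\overline{g}=U\), namely \((U\overline{g}U)^{-1}=\Transpose{g}\) and \(\Transpose{g}Ug-U=\Transpose{g}U(g-\overline{g})=2i\,\Transpose{g}UBU\). Since \(M_{21}\imath(\eta)+M_{22}=U\overline{g}U+2i\,UBU\eta\Transpose{\eta}/q_U(\eta)\), the matrix determinant lemma gives
\[
	\chi\bigl(\jmath(g),\imath(\eta)\bigr)
	=\overline{\det g}\,\Bigl(1+\frac{2i\,\Transpose{\eta}\Transpose{g}UBU\eta}{q_U(\eta)}\Bigr)
	=\det(g)^{-1}\,\frac{q_U(g\eta)}{q_U(\eta)}.
\]
Your route makes \eqref{eq:qU-automorphy} self-contained. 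It also shows that the identity holds for every \(g=A+iBU\) preserving \(U\), with no integrality or congruence condition. The paper's route is shorter and relies entirely on \cite{MMN07}.

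If you carry out the optional direct check of \eqref{eq:fJ-automorphy} on generators, one form suffices rather than five. Every \(f_J\) is a \(\Gamma_U\)-translate of \(f_{J_1}\), and the normality of \(\Gamma_M\) in \(\Gamma_U\), together with the cocycle property, transports the automorphy to all \(105\) forms.
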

\begin{proof}
	The forms \(f_J\) are defined in \cite[Section 2.4, Facts 3--5]{MMN07} as the \(\Gamma_{U}\)-translates of \(f_{J_1}\), and the same facts give
	\[
		f_J(g\eta) = \psi(g,\eta)^2 f_J(\eta),
		\qquad
		\psi(g,\eta) = \frac{q_{U}(g\eta)}{q_{U}(\eta)},
	\]
	for \(g\in \Gamma_{M}\).
	The quotient \(\psi(g,\eta)\) is well defined by Lemma~\ref{lem:nonvanishing}.
	This transformation law applies to each form \(f_J\) as a whole, and not to the individual theta constants occurring in \eqref{eq:fJ-list}.
	By \cite[Lemma 1]{MMN07}, we have \(\psi(g,\eta) = \det(g)\chi\bigl(\jmath(g),\imath(\eta)\bigr)\), which gives \eqref{eq:qU-automorphy}.
	Moreover, \cite[Proposition 2]{MMN07} gives \(\det(g)^2 = \operatorname{sign}(s(g)) = 1\) for \(g \in \Gamma_{M} = \ker s\), where \(s \colon \Gamma_{U} \to S_8\) is the homomorphism of \cite[Section 2.3]{MMN07} sending \(Tg_{jk}T^{-1}\) to the transposition \((j,k)\); the equality \(\det(g)^2 = 1\) yields \eqref{eq:fJ-automorphy}.
	Dividing \eqref{eq:fJ-automorphy} by the square of \eqref{eq:qU-automorphy} and using \(\det(g)^2=1\), we obtain the last assertion.
\end{proof}

\section{Constancy of the Ratio}
\label{sec:constancy}
In this section, we show that the ratio \(f_J(\eta)/\bigl(q_{U}(\eta)^2\, \hat x_J(x)\bigr)\) descends to a function on \(X(2,8)\) that is constant and independent of \(J\).

Throughout this section, we regard \(X(2,8)\) as a Zariski open subset of \(\Ps^5\) via the coordinates \((x_2,\ldots,x_6)\).
We use the homogeneous coordinates \([y_1 : y_2 : \cdots : y_6]\) on \(\Ps^5\), so that \(x_j = y_j / y_1\).
We write \(y\) for a point of \(\Ps^5\).
The boundary \(\Ps^5 \setminus X(2,8)\) is the union of the \(21\) hyperplanes
\begin{equation}
	\label{eq:boundary-list}
	\begin{gathered}
		D_{jk} = \{ y \in \Ps^5 \mid y_j = y_k \}
		\qquad (2 \leq j < k \leq 6),\\
		D_{0j} = \{ y \in \Ps^5 \mid y_j = 0 \},\qquad
		D_{1j} = \{ y \in \Ps^5 \mid y_j = y_1 \}
		\qquad (2 \leq j \leq 6),\\
		H_\infty = \{ y \in \Ps^5 \mid y_1 = 0 \},
	\end{gathered}
\end{equation}
that is, of the \(\binom{5}{2} + 5 + 5 = 20\) hyperplanes \(D_{\bullet\bullet}\) and the hyperplane at infinity.
By a generic point of a boundary hyperplane, we mean a point lying outside the intersections of that hyperplane with the other boundary hyperplanes.
A generic point of each of the \(20\) hyperplanes \(D_{\bullet\bullet}\) corresponds to the confluence of exactly two of the eight points, under the normalization \(x_1 = 0\), \(x_7 = 1\), and \(x_8 = \infty\).
In the coordinates \((x_2,\ldots,x_6)\), \(H_\infty\) is not given by any of the affine equations \(x_j = x_k\), \(x_j = 0\), or \(x_j = 1\).
A generic point of \(H_\infty\) corresponds to such a confluence only after a projective change of coordinates; see Step 3\('\) in the proof of Proposition~\ref{prop:constancy}.
The locus where more than two of the eight points coincide has codimension at least \(2\) in \(\Ps^5\), and so does the locus where each of two disjoint pairs of the eight points coincides.

We recall Deligne--Mostow's extension results, which apply near a generic point of each of the \(20\) hyperplanes \(D_{\bullet\bullet}\).

\begin{fact}
	\label{fact:DM86-extension}
	(\cite[Corollary~6.8, Proposition~8.7, and Sections~9.5--9.6]{DM86}) After we pass to a finite local cover and fix the marking, the period \(\eta\) extends holomorphically across each of the \(20\) hyperplanes \(D_{\bullet\bullet}\) in \eqref{eq:boundary-list} near a generic point of that hyperplane.
	The extension takes values in \(\mathcal{B}\).
\end{fact}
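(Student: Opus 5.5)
The plan is a local computation of the periods near a generic point of one boundary hyperplane, following the strategy of \cite[Sections~9.5--9.6]{DM86}. All twenty hyperplanes \(D_{\bullet\bullet}\) are treated uniformly. At a generic point of any of them exactly two of the eight points \(x_j,x_k\) coincide, and both carry the local exponent \(\mu_j=\mu_k=\tfrac14\) of \(dz/w\). Hence
\[
\mu_j+\mu_k=\tfrac12<1,
\]
which is the condition in \cite{DM86} under which the limiting point stays in the ball and does not go to its boundary. I would therefore fix such a generic point, take a polydisc neighbourhood, and write \(x_k=x_j+t\), with the other coordinates \(x'\) close to fixed distinct values.

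\emph{Step 1 (splitting the periods).} I would choose a local basis of the \((-1)\)-eigenspace of \(\rho^2\) adapted to the collision:
\begin{itemize}
\item the vanishing cycles are supported on the lift of the short segment \([x_j,x_k]\);
\item the remaining cycles avoid a small disc containing \(x_j\) and \(x_k\).
\end{itemize}
Integrals over the remaining cycles are holomorphic in \((t,x')\) across \(t=0\). For the vanishing segment I would substitute \(z=x_j+t s\), which gives
\[
\int_{x_j}^{x_k}\frac{dz}{w}=t^{1-\mu_j-\mu_k}\,\phi(t,x')=t^{1/2}\,\phi(t,x'),
\]
where \(\phi\) is holomorphic and \(\phi(0,x')\) is a nonzero beta integral times a fourth root of a nonvanishing factor. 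The marked basis \(\mathfrak a_1,\ldots,\mathfrak a_6\) is a fixed \(\Z[i]\)-combination of these local cycles. So every component of \(\eta\) has the form \(F(t,x')+t^{1/2}G(t,x')\) with \(F,G\) holomorphic.

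\emph{Step 2 (finite cover and extension).} The local monodromy around \(D_{jk}\) is the full twist \(g_{jk}^2\) of Fact~\ref{fact:MT04-periods}(4). It is a reflection of order two, which matches the \(t^{1/2}\) behaviour. On the double cover \(t=s^2\), with the marking fixed, \(\eta\) becomes single-valued. It is bounded near \(s=0\), hence holomorphic by Riemann extension, or directly from the explicit form in Step~1.

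\emph{Step 3 (the extension lies in \(\mathcal B\)).} This is the main point. At \(s=0\) the vanishing contribution disappears. The limiting vector \(\eta_0\) is therefore the period vector of the degenerate cover in which \(x_j=x_k\) is a single point of weight \(\tfrac12\), paired against the cycles orthogonal to the vanishing cycle. I would show \(h_U(\eta_0,\eta_0)<0\) in two parts.
\begin{itemize}
\item Continuity of the Hodge--Riemann inequality of Fact~\ref{fact:MT04-periods}(3) gives only \(\le 0\).
\item Strictness comes from identifying \(h\) on the orthogonal complement of the vanishing cycle. Because the vanishing cycle has \(h\)-norm of the opposite, positive sign, this complement carries a form of signature \((4,1)\). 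That form is the Hermitian form of the seven-point family with weights \((\tfrac14,\ldots,\tfrac12,\ldots,\tfrac14)\), which still sum to \(2\). For that family \(\eta_0\) is a genuine period of a holomorphic form on a smooth curve, and Hodge--Riemann then gives strict negativity.
\end{itemize}
I expect the main obstacle to be checking carefully that the vanishing cycle is \(h\)-positive and that the degeneration is compatible with the marking, exactly as in \cite[Proposition~8.7]{DM86}.

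Finally, \(H_\infty\) is not among the twenty hyperplanes and is excluded from this statement. It will be handled later by a projective change of coordinates.
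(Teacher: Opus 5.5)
Your argument is correct and is essentially the Deligne--Mostow local analysis that the paper quotes without reproducing. It consists of the expansion $F+t^{1/2}G$, the double cover $t=s^2$ that kills the order-two monodromy $g_{jk}^2$, and strict negativity of $h_U(\eta_0,\eta_0)$, obtained by identifying the $h$-orthogonal complement of the $h$-positive vanishing cycle with the Hermitian space of the normalized seven-point curve, on which $dz/w$ is holomorphic. One minor point: you claim the $\mathfrak{a}_j$ are $\mathbb{Z}[i]$-combinations of your adapted cycles, which is not obvious, since over $\mathbb{Z}$ the vanishing and disc-avoiding cycles may span only a finite-index sublattice; but $\mathbb{Q}[i]$-coefficients are all the argument uses.
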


We next separate the covariance under changes of coordinates on \(\Ps^1\) from the equivariance under changes of the marking; the following lemma gives the covariance.

\begin{lemma}
	\label{lem:pgl-covariance}
	Let \(p_1,\ldots,p_8\) be pairwise distinct points of \(\Ps^1\), written \(p_l=[u_l:v_l]\), and put
	\[
		[p_l,p_m]=u_lv_m-u_mv_l,\qquad
		P_J=\prod_{k=1}^4[p_{j_{2k-1}},p_{j_{2k}}],
		\qquad
		\omega=\frac{v\,du-u\,dv}{w},
	\]
	where \(p=[u:v]\) and \(w^4=\prod_{l=1}^8[p,p_l]\).
	Here \(\eta\) denotes the period of \(\omega\) with respect to the cycles \(\mathfrak{a}_1,\ldots,\mathfrak{a}_6\) of Fact~\ref{fact:MT04-periods}.
	We transport these cycles by the projective change of coordinates that carries the normalized representative of the class of \((p_1,\ldots,p_8)\) to \((p_1,\ldots,p_8)\).
	We define
	\[
		G_J(p_1,\ldots,p_8)
		=\frac{f_J(\eta)}{q_{U}(\eta)^2\,\varepsilon_JP_J}.
	\]
	Then \(G_J\) is independent of the lifts \((u_l,v_l)\) of the points \(p_l\) and invariant under \(\PGL(2,\C)\).
	In the affine chart \(p_l=[x_l:1]\), \(p_8=[1:0]\), this definition gives
	\[
		G_J=\frac{f_J(\eta)}{q_{U}(\eta)^2\,\hat x_J(x)}.
	\]
\end{lemma}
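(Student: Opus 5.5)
The plan is a weight count under the two group actions: rescaling of the homogeneous lifts \((u_l,v_l)\), and the action of \(\GL(2,\C)\). Then we specialize to the affine chart.

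\emph{Independence of the lifts.} Replace \((u_l,v_l)\) by \(\lambda_l(u_l,v_l)\) with \(\lambda_l\in\C^\ast\). The function \([p,p_l]\) is multiplied by \(\lambda_l\), so \(w^4\) is multiplied by \(\Lambda=\prod_l\lambda_l\). We choose the continuation so that \(w\) is multiplied by a fixed fourth root \(\Lambda^{1/4}\); another choice of root changes \(w\) by a power of \(\rho\), which only remarks the cycles, and we fix it by the transport convention. Then \(\omega\) and \(\eta\) are multiplied by \(\Lambda^{-1/4}\). Since \(\imath(\eta)\) depends only on the class of \(\eta\) (Fact~\ref{fact:iota}(1)), \(f_J(\eta)\) is unchanged. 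On the other hand \(q_U(\eta)^2\) is multiplied by \(\Lambda^{-1}\), and \(P_J\) is multiplied by \(\Lambda\), because every index \(l\) occurs in exactly one block of \(J\). Hence \(q_U(\eta)^2P_J\) is unchanged, and so is \(G_J\).

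\emph{Invariance under \(\PGL(2,\C)\).} Take \(A\in\GL(2,\C)\) and replace \(p_l\) by \(Ap_l\). Since \([Ap,Aq]=\det A\,[p,q]\), the polynomial \(P_J\) is multiplied by \((\det A)^4\). Next substitute \(p=Ap'\). Then \(w^4\) becomes \((\det A)^8\prod_l[p',p_l]\), so \(w\) becomes \((\det A)^2w'\), up to a fourth root of unity that is again absorbed by the transport of the cycles. Also \(v\,du-u\,dv\) becomes \(\det A\,(v'du'-u'dv')\). Therefore \(\omega\) pulls back to \((\det A)^{-1}\omega'\). The cycles are transported by the same projective change, so \(\eta\) is multiplied by \((\det A)^{-1}\) and \(q_U(\eta)^2\) by \((\det A)^{-4}\). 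Again \(f_J(\eta)\) is unchanged by Fact~\ref{fact:iota}(1). The product \(q_U^2P_J\) is therefore invariant, and \(G_J\) is invariant too.

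The main obstacle is to make the transport of cycles consistent: the marking defined through the normalized representative must be compatible with these rescalings and root choices, so that no hidden monodromy element enters. If a monodromy element \(g\in\Gamma_M\) does enter, Proposition~\ref{prop:automorphy} shows that \(f_J/q_U^2\) is unchanged, so \(G_J\) is still well defined. This also covers the choice of path in the transport.

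\emph{The affine chart.} Take \(p_l=[x_l:1]\) for \(l\le7\), \(p_8=[1:0]\), and use the affine coordinate \(z=u/v\). Then \([p,p_l]=v(z-x_l)\) for \(l\le7\) and \([p,p_8]=-v\), so \(w^4=-v^8\prod_{l\le7}(z-x_l)\). Also \(v\,du-u\,dv=v^2dz\), hence \(\omega=\zeta^{-1}dz/w_{\mathrm{aff}}\) with \(\zeta^4=-1\).

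For the points, \([p_l,p_m]=x_l-x_m\) when \(l,m\le7\), and \([p_l,p_8]=-1\). Thus \(P_J=-x_J\), since the factor containing \(\infty\) is omitted in \(x_J\). On the period side, \(q_U(\eta)^2\) picks up the factor \(\zeta^{-4}=-1\). The two signs cancel, so \(q_U(\eta)^2\varepsilon_JP_J=q_U(\eta_{\mathrm{aff}})^2\hat x_J(x)\). We again use the convention that the root \(\zeta\) is absorbed by the transport of the cycles. This gives the stated formula for \(G_J\).
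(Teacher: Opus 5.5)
Your proof is correct and follows the paper's argument essentially step for step. It uses the same weight count under rescaling of the lifts (\(P_J\mapsto RP_J\) against \(q_{U}(\eta)^2\mapsto R^{-1}q_{U}(\eta)^2\)) and under \(\GL(2,\C)\) (\((\det A)^{4}\) against \((\det A)^{-4}\)), with \(f_J(\eta)\) unchanged by Fact~\ref{fact:iota}(1), and the same cancellation of \(P_J=-x_J\) against \(q_{U}(\eta_h)^2=-q_{U}(\eta_a)^2\) in the affine chart. The only difference is cosmetic: you absorb the root-of-unity ambiguities into the marking, whereas the paper simply notes that a fourth root of unity \(\zeta\) multiplies \(q_{U}(\eta)^2\) by \(\zeta^4=1\) and leaves \(f_J(\eta)\) fixed; this is cleaner, and it makes your closing remark about absorbing the eighth root \(\zeta\) unnecessary.
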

\begin{proof}
	Suppose that \(p_l\) is replaced by \(t_lp_l\) with \(t_l \in \C^{\times}\) for \(1\leq l\leq 8\), and put \(R=\prod_{l=1}^8t_l\).
	Then \(P_J\) is multiplied by \(R\), independently of \(J\).
	We choose a local fourth root of \(R\).
	Then \(w\) is multiplied by \(R^{1/4}\), so \(\omega\) and \(\eta\) are multiplied by \(R^{-1/4}\), and \(q_{U}(\eta)^2\) is multiplied by \(R^{-1}\).
	By Fact~\ref{fact:iota}(1), the matrix \(\imath(\eta)\), and therefore \(f_J(\eta)\), is unchanged by scalar multiplication of \(\eta\).
	Thus \(G_J\) is independent of the lifts.
	A different choice of the fourth root only multiplies \(\eta\) by a fourth root of unity and yields the same conclusion.

	For \(A\in\GL(2,\C)\), we replace \(p_l\) by \(Ap_l\) for \(1\leq l\leq 8\), and we have
	\[
		[Ap_l,Ap_m]=\det(A)[p_l,p_m],
	\]
	so that \(P_J\) is multiplied by \(\det(A)^4\).
	The numerator \(v\,du-u\,dv\) is multiplied by \(\det(A)\), and the product \(\prod_{l=1}^8[p,p_l]\) is multiplied by \(\det(A)^8\).
	Therefore \(\omega\) and \(\eta\) are multiplied by \(\det(A)^{-1}\), and \(q_{U}(\eta)^2\) is multiplied by \(\det(A)^{-4}\).
	Thus the two factors in the denominator of \(G_J\) are multiplied by \(\det(A)^{-4}\) and \(\det(A)^{4}\), respectively, but \(f_J(\eta)\) is unchanged.
	Hence \(G_J\) is invariant under \(\PGL(2,\C)\).

	In the affine chart of the statement, we write \(w_h\) for the function of the homogeneous model, whose equation is
	\[
		w_h^4=-\prod_{j=1}^7(z-x_j).
	\]
	The affine model used in Section~\ref{sec:preliminaries} is \(w_a^4=\prod_{j=1}^7(z-x_j)\).
	We choose a primitive eighth root of unity \(\zeta_8\), so that \(\zeta_8^4=-1\), and identify the two models by \(w_h=\zeta_8 w_a\).
	Then \(\eta_h=\zeta_8^{-1}\eta_a\) and
	\[
		q_{U}(\eta_h)^2=\zeta_8^{-4}q_{U}(\eta_a)^2=-q_{U}(\eta_a)^2.
	\]
	Since \(P_J=-x_J\) and \(f_J(\eta)\) is unchanged by scalar multiplication of \(\eta\), the two negative signs cancel and give the affine formula in the statement.
\end{proof}

The following lemma gives the equivariance under changes of the marking.

\begin{lemma}
	\label{lem:gamma-equivariance}
	Let \(g \in \Gamma_{U}\), and put \(\sigma = s(g) \in S_8\), where \(s \colon \Gamma_{U} \to S_8\) is the homomorphism of \cite[Section 2.3]{MMN07}.
	Then, for every \((2,2,2,2)\)-partition \(J\), there is a sign \(\varepsilon(\sigma,J) \in \{\pm1\}\) such that
	\[
		\hat x_J(\sigma\cdot x) = \varepsilon(\sigma,J)\,\hat x_{\sigma^{-1}(J)}(x),
		\qquad
		f_J(g\eta) = \varepsilon(\sigma,J)\,\psi(g,\eta)^2 f_{\sigma^{-1}(J)}(\eta),
	\]
	where \(\psi(g,\eta) = q_{U}(g\eta)/q_{U}(\eta)\).
	In particular, we have
	\[
		\frac{f_J(g\eta)}{q_{U}(g\eta)^2\,\hat x_J(\sigma\cdot x)}
		=\frac{f_{\sigma^{-1}(J)}(\eta)}{q_{U}(\eta)^2\,\hat x_{\sigma^{-1}(J)}(x)}.
	\]
\end{lemma}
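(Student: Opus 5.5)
The plan is to read both identities off the \(S_8\)-equivariance of the collections \((\hat x_J)_J\) and \((f_J)_J\) recorded in \cite[Facts 3--5]{MMN07}, and then to show that the two signs coincide.

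\emph{The polynomial side.} Since \((\sigma\cdot x)_l = x_{\sigma^{-1}(l)}\), each factor \(x_{j_{2k-1}}-x_{j_{2k}}\) of \(x_J(\sigma\cdot x)\) becomes \(x_{\sigma^{-1}(j_{2k-1})}-x_{\sigma^{-1}(j_{2k})}\). After reordering each pair increasingly, this is \(\pm x_{\sigma^{-1}(J)}(x)\). A factor involving \(\infty\) is omitted on both sides, and we use the homogeneous form \(P_J\) of Lemma~\ref{lem:pgl-covariance} so that no projective normalization creeps in. We then define
\[
	\varepsilon(\sigma,J)=\varepsilon_J\,\varepsilon_{\sigma^{-1}(J)}\cdot(\text{sign from reordering}).
\]
This gives the first identity, with a sign that depends only on \(\sigma\) and \(J\).

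\emph{The form side.} The forms are defined in \cite[Section 2.4, Facts 3--5]{MMN07} as \(\Gamma_U\)-translates of \(f_{J_1}\). The equivariance statement of \cite[Fact 5]{MMN07} says that \(g\) permutes the \(f_J\) according to \(s(g)\), up to signs and the factor \(\psi(g,\eta)^2\). Concretely, \(f_J(g\eta)=\varepsilon'(g,J)\,\psi(g,\eta)^2 f_{\sigma^{-1}(J)}(\eta)\), and this is consistent with Proposition~\ref{prop:automorphy} when \(g\in\Gamma_M\), where \(\sigma=1\) and \(\varepsilon'=1\). It remains to show \(\varepsilon'(g,J)=\varepsilon(\sigma,J)\). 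This is the main obstacle, and the tables alone do not make it transparent.

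To match the signs, I would use the coincidence of projective points \([f_J(\eta)]_J=[\hat x_J(x)]_J\) from \cite[Fact 5]{MMN07}, applied at two points.
\begin{enumerate}
	\item Take \(g\) to be a monodromy lift realizing \(\sigma\). That is, continue \(\tilde x\) along a braid whose image in \(S_8\) is \(\sigma\), so that \(\eta(\tilde x')=g\eta(\tilde x)\) with \(\varpi(\tilde x')\) equal to \(\sigma\cdot x\) after relabeling. Such lifts exist because \(s\) is surjective with kernel \(\Gamma_M=\mu(\pi_1)\), and the half twists generate.
	\item Proportionality at \(\tilde x'\) gives \(f_J(g\eta)=c'\,\hat x_J(\sigma\cdot x)\). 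Proportionality at \(\tilde x\) gives \(f_{\sigma^{-1}(J)}(\eta)=c\,\hat x_{\sigma^{-1}(J)}(x)\).
	\item Substitute both into the two identities above. This yields \(\varepsilon'(g,J)\varepsilon(\sigma,J)=c'/(c\,\psi^2)\), which is independent of \(J\).
	\item Evaluate at \(J=J_1\), where direct inspection of the generators \(Tg_{jk}T^{-1}\) acting on \(f_{J_1}\) via the theta transformation formula fixes the normalization. Alternatively, absorb the common \(J\)-independent factor into the sign convention of \(\varepsilon'\).
\end{enumerate}
The resulting product is then \(1\) for all \(J\), since generic \(x\) has all \(\hat x_J\neq0\). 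The well-definedness of \(\varepsilon'\) requires that \(\varepsilon'(g,J)\) depend only on \(s(g)\). This follows because it is unchanged when \(g\) is multiplied by an element of \(\Gamma_M\), by Proposition~\ref{prop:automorphy} and the cocycle property of \(\psi\).

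The final displayed identity follows by dividing the second identity by \(q_U(g\eta)^2=\psi(g,\eta)^2q_U(\eta)^2\) and the first identity by \(\varepsilon(\sigma,J)\). The two factors \(\varepsilon(\sigma,J)\) and \(\psi^2\) then cancel.
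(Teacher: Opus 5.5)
Your steps 1--3 are essentially sound, but they prove less than you need. Substituting the two proportionalities gives \(\varepsilon'(g,J)\,\varepsilon(\sigma,J)=c'/(c\,\psi^2)=:\delta\). This is a sign independent of \(J\), and that is all the coincidence \([f_J(\eta)]_J=[\hat x_J(x)]_J\) can ever deliver, because a projective identity cannot see a common sign. Concretely, for your monodromy realization of \(g\), \(\delta\) is the ratio of the values of the function \(G\) of Proposition~\ref{prop:constancy} at the relabelled configuration and at \(x\). Before that proposition is available (its Step~3\('\) uses this very lemma), nothing you have invoked prevents \(G\) from changing sign under relabelling.

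Step~4 is where \(\delta=1\) would have to be shown, and neither option does it. The ``direct inspection'' of \(Tg_{jk}T^{-1}\) on \(f_{J_1}\) via the theta transformation formula is exactly the missing sign computation, and you do not carry it out. It involves the eighth roots of unity in the transformation formula and the characteristic shifts. Since \(\sigma^{-1}(J_1)\neq J_1\) in general, it also involves the table conventions that fix the sign of \(f_{\sigma^{-1}(J_1)}\) relative to \(f_{J_1}\). Absorbing \(\delta\) into \(\varepsilon'\) is not allowed either. Both \(f_J\) and \(\varepsilon_J\) are fixed by the tables of \cite{MMN07}, and the lemma demands one sign serving both identities. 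If \(\delta(\sigma)=-1\), the final displayed identity would acquire the factor \(\delta(\sigma)\). Your sentence ``the resulting product is then \(1\) for all \(J\), since generic \(x\) has all \(\hat x_J\neq0\)'' is a non sequitur: nonvanishing of the \(\hat x_J\) is what gave \(J\)-independence in step~3, not the value of \(\delta\).

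The paper does not derive the sign matching at all. It takes from \cite[Fact 5]{MMN07}, together with the normalization of \cite[Section 5]{MMN07}, that \((f_J)_J\) and \((\hat x_J)_J\) are \(S_8\)-equivariant with the \emph{same} multipliers. That is, \(f_J^{g}=\varepsilon(\sigma,J)f_{\sigma^{-1}(J)}\) for the action \(f^{g}(\eta)=\psi(g,\eta)^{-2}f(g\eta)\). Its only additional step is to note that all \(f_J(\eta)\) are nonzero, by the same projective equality, so that the quotients are defined; your proposal omits this point. Since you already rely on that Fact for the existence of \(\varepsilon'\), the natural fix is to take the matched signs from the same source.

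If you want to avoid that citation, there are two alternatives. You can carry out an explicit theta-transformation check of the sign for one \(J\) per \(\sigma\), which suffices because \(\delta\) is \(J\)-independent. Or you can reorganize: Step~3\('\) of Proposition~\ref{prop:constancy} only needs \(G\circ\sigma_\infty=\pm G\). So the lemma up to \(\delta\) already makes \(G\) a nonzero constant, and that in turn forces \(\delta=1\).
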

\begin{proof}
	Reordering the two entries in each factor of \(x_J(\sigma\cdot x) = \pm x_{\sigma^{-1}(J)}(x)\) and inserting the signs \(\varepsilon_J\) and \(\varepsilon_{\sigma^{-1}(J)}\) of \(\hat x_J = \varepsilon_J x_J\), we obtain the first identity, with a sign \(\varepsilon(\sigma,J)\) depending only on \(\sigma\) and \(J\).
	For the second identity, we use the action \(f^{g}(\eta) = \psi(g,\eta)^{-2} f(g\eta)\) of \(\Gamma_{U}\) on the automorphic forms of weight \(2\) with respect to \(\Gamma_{M}\); cf.\ \cite[Section 2.4]{MMN07}.
	By \cite[Fact 5]{MMN07} and the normalization of \cite[Section 5]{MMN07}, the collections \((f_J)_J\) and \((\hat x_J)_J\) are \(S_8\)-equivariant with the same multipliers; that is, \(f_J^{g} = \varepsilon(\sigma,J)\, f_{\sigma^{-1}(J)}\) with the sign \(\varepsilon(\sigma,J)\) of the first identity.
	This equality is the second identity.
	Since every coordinate \(\hat x_J(x)\) is nonzero on \(X(2,8)\), the equality of the two projective points established in \cite[Fact 5]{MMN07} shows that every coordinate \(f_J(\eta)\) is nonzero, and the quotients in the last assertion are defined.
	Dividing the second identity by \(q_{U}(g\eta)^2 = \psi(g,\eta)^2\,q_{U}(\eta)^2\) and by the first identity, we obtain the last assertion.
\end{proof}

Then we have the following proposition.

\begin{proposition}
	\label{prop:constancy}
	For each \((2,2,2,2)\)-partition \(J\), the function \(G_J\) of Lemma~\ref{lem:pgl-covariance} descends from \(\widetilde{X}(2,8)\) to a single-valued holomorphic function on \(X(2,8)\) and is constant.
	Moreover, the constant is independent of \(J\); we write \(G\) for the common function.
\end{proposition}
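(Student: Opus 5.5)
First I show that \(G_J\) descends to \(X(2,8)\). On \(\widetilde{X}(2,8)\) the function \(\tilde x \mapsto f_J(\eta(\tilde x))/\bigl(q_{U}(\eta(\tilde x))^2\,\hat x_J(\varpi(\tilde x))\bigr)\) is holomorphic. The denominator does not vanish: \(q_{U}(\eta)\neq 0\) by Lemma~\ref{lem:nonvanishing}, and \(\hat x_J\) is nonzero on \(X(2,8)\). A deck transformation \(\gamma\) replaces \(\eta\) by \(\mu(\gamma)\eta\) with \(\mu(\gamma)\in\Gamma_{M}\). The last identity of Proposition~\ref{prop:automorphy} shows that \(f_J/q_{U}^2\) is invariant under this replacement. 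Since \(\hat x_J(x)\) depends only on \(x\), the ratio is deck-invariant and defines a single-valued holomorphic function on \(X(2,8)\subset\Ps^5\).

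Next I extend \(G_J\) across the boundary hyperplanes \eqref{eq:boundary-list}. By Hartogs' theorem, it suffices to extend \(G_J\) holomorphically near a generic point of each of the \(21\) hyperplanes, because the non-generic loci have codimension at least \(2\).

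\emph{Step 1: the hyperplanes \(D_{\bullet\bullet}\).} Take a generic point \(y_0\) of a hyperplane \(D_{\bullet\bullet}\) and a small polydisc neighbourhood \(V\), so that \(V\cap X(2,8)=V\setminus D\) has fundamental group \(\Z\), generated by a loop around \(D\). By Fact~\ref{fact:DM86-extension}, after the finite cover \(t\mapsto t^m\) in the normal coordinate, \(\eta\) extends holomorphically to the cover with values in \(\mathcal{B}\). Hence \(q_{U}(\eta)\neq0\) at the extension, and \(f_J(\eta)\) is holomorphic there. The point is that \(\imath(\eta)\in\mathfrak{S}_6\) stays in the interior, so the theta constants are holomorphic. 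Therefore \(G_J\) pulled back to the cover has at worst the pole coming from \(1/\hat x_J\), which vanishes to order at most \(1\) along \(D\). The loop invariance already proved lets \(G_J\) descend to \(V\setminus D\), so it is meromorphic on \(V\) with at most a simple pole along \(D\).

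\emph{Step 2: no pole along \(D\).} I expect this to be the main obstacle. My plan is to compare with another partition. Choose \(J'\) whose blocks do not contain the coinciding pair, so that \(\hat x_{J'}\) does not vanish on \(D\). By the equality of projective points (\cite[Fact 5]{MMN07}), the ratio \(G_J/G_{J'}=1\) on \(X(2,8)\). So \(G_J=G_{J'}\), and \(G_{J'}\) is holomorphic near \(y_0\) by Step 1 with no pole. This argument simultaneously gives the independence of \(J\) claimed in the statement.

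\emph{Step 3: the hyperplane \(H_\infty\).} A generic point of \(H_\infty\) is treated by a projective change of coordinates. Near such a point, \(x_2,\ldots,x_6\to\infty\) at a common rate, so the points \(x_2,\ldots,x_6\) collide with \(x_8=\infty\) only in a coordinate-dependent sense. Concretely, a point of \(H_\infty\) in \(\Ps^5\) corresponds to a genuine configuration after renormalizing by an element of \(\PGL(2,\C)\), and generically to a confluence of exactly two of the eight points. By Lemma~\ref{lem:pgl-covariance}, \(G_J\) is \(\PGL(2,\C)\)-invariant and independent of lifts, so I rewrite \(G_J\) in a chart in which the hyperplane becomes a \(D_{\bullet\bullet}\)-type confluence. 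The marking is transported as in Lemma~\ref{lem:gamma-equivariance}, which permutes the \(J\) and changes only signs in a way compatible with the ratio. Steps 1 and 2 then apply verbatim.

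Hence \(G\) extends to a holomorphic function on \(\Ps^5\) minus a codimension-\(2\) set, hence to all of \(\Ps^5\), and is therefore constant. Independence of \(J\) follows from Step 2, or directly from the coincidence of projective points \([f_J(\eta)]_J=[\hat x_J(x)]_J\), which makes all the ratios \(G_J\) equal.
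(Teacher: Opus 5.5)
Your proposal is correct and follows essentially the same route as the paper. The shared steps are: descent via Proposition~\ref{prop:automorphy}; independence of \(J\) from the coincidence of projective points; removal of the singularity at a generic point of each \(D_{\bullet\bullet}\) by passing to a partition \(J'\) that does not pair the colliding points, combined with the Deligne--Mostow extension on a finite local cover; transport of \(H_\infty\) to a \(D_{\bullet\bullet}\) using Lemmas~\ref{lem:pgl-covariance} and~\ref{lem:gamma-equivariance} (the paper's \(\sigma_\infty\), namely \(z\mapsto y_1z/y_2\) followed by the transposition \((2\,7)\), which carries \(H_\infty\) onto \(D_{02}\)); and Hartogs plus compactness of the projective space. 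Two minor points do not affect the argument: your preliminary simple-pole bound is superfluous once \(J'\) is used, and at a generic point of \(H_\infty\) the colliding pair is \(\{x_1,x_7\}\), which is why a relabeling is needed before the \(D_{\bullet\bullet}\) case applies.
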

\begin{proof}
	\emph{Step 1: single-valuedness.}
	For \(\xi\in\mathcal{B}\), we set \(F_J(\xi)=f_J(\xi)/q_{U}(\xi)^2\).
	By Lemma~\ref{lem:nonvanishing}, the denominator does not vanish, so \(F_J\) is holomorphic.
	Proposition~\ref{prop:automorphy} establishes the cancellation of the factor \(\psi(g,\eta)^2\) and shows that \(F_J\) is invariant under \(\Gamma_{M}\).
	Thus \(F_J \circ \eta\), and therefore \(G_J\), is invariant under the deck transformations of \(\varpi\) and descends to a single-valued holomorphic function on \(X(2,8)\).

	\emph{Step 2: independence of \(J\).}
	As recalled in Section~\ref{sec:preliminaries}, the forms \(f_J\) in \eqref{eq:fJ-list} represent the forms \(T_J^{(2)}\) of \cite{MT04} in the normalization of \cite[Section 5]{MMN07}.
	By \cite[Fact 5]{MMN07}, which rests on \cite[Theorem 6.4]{MT04}, the projective points \([\ldots : f_J(\eta) : \ldots]_J\) and \([\ldots : \hat x_J(x) : \ldots]_J\) coincide.
	Since every coordinate \(\hat x_J(x)\) is nonzero on \(X(2,8)\), the equality of the two projective points shows that the ratio \(f_J(\eta)/\hat x_J(x)\) is independent of \(J\).
	Dividing the ratio \(f_J(\eta)/\hat x_J(x)\) by the common factor \(q_{U}(\eta)^2\), which is nonzero by Lemma~\ref{lem:nonvanishing}, we obtain \(G_J = G_{J'}\) on \(X(2,8)\) for all \(J\) and \(J'\).

	\emph{Step 3: extension across the codimension-one boundary.}
	Let \(D\) be one of the \(20\) hyperplanes \(D_{\bullet\bullet}\) in \eqref{eq:boundary-list}, and let \(x_0\) be a generic point of \(D\).
	We choose a partition \(J'\) that does not pair the two points coinciding on \(D\), so that \(\hat x_{J'}(x_0) \neq 0\).
	By Fact~\ref{fact:DM86-extension}, the period \(\eta\) extends holomorphically across the preimage of \(D\), with values in \(\mathcal{B}\), after we pass to a finite local cover and fix the marking.
	Since \(F_{J'}\) is holomorphic on \(\mathcal{B}\) and \(\hat x_{J'}(x_0)\neq0\), the function \((F_{J'}\circ\eta)/\hat x_{J'}\) extends holomorphically to this local cover.
	By Step 1, away from the preimage of \(D\), the extension of \((F_{J'}\circ\eta)/\hat x_{J'}\) is invariant under the deck transformations of this local cover, and hence the extension is invariant everywhere by the identity theorem.
	The extension of \((F_{J'}\circ\eta)/\hat x_{J'}\) therefore descends to a holomorphic extension of \(G\) across \(D\) at \(x_0\).

	\emph{Step 3\('\): extension across the hyperplane at infinity.}
	We change the coordinate on \(\Ps^1\) by \(\Ps^1 \ni z \mapsto y_1z/y_2 \in \Ps^1\) and then transpose the labels \(2\) and \(7\).
	The composite takes the configuration \((0,x_2,\ldots,x_6,1,\infty)\) with \(x_j = y_j/y_1\) to the configuration \((0,x_2',\ldots,x_6',1,\infty)\) with
	\[
		x_2' = \frac{y_1}{y_2},
		\qquad
		x_j' = \frac{y_j}{y_2}
		\qquad (3\leq j\leq 6).
	\]
	In the homogeneous coordinates of \eqref{eq:boundary-list}, the composite is the linear map
	\[
		\sigma_\infty\colon
		\Ps^5 \ni [y_1:y_2:y_3:y_4:y_5:y_6]\longmapsto[y_2:y_1:y_3:y_4:y_5:y_6] \in \Ps^5,
	\]
	an involutive automorphism of \(\Ps^5\) that carries \(H_\infty\) onto \(D_{02}\).
	The map \(\sigma_\infty\) carries a generic point of \(H_\infty\) to a generic point of \(D_{02}\).
	Moreover, \(\sigma_\infty\) restricts to a biholomorphism from a neighborhood of that generic point of \(H_\infty\) onto a neighborhood of the image of that point.
	The transposition of the labels \(2\) and \(7\) is induced by the unitary reflection \(g_{27}\) of Fact~\ref{fact:MT04-periods}, and \(Tg_{27}T^{-1}\) lies in \(\Gamma_{U}\) and satisfies \(s(Tg_{27}T^{-1}) = (2\,7)\).
	Hence, by Lemmas~\ref{lem:pgl-covariance} and~\ref{lem:gamma-equivariance}, together with the equality \(G_J = G_{J'}\) established in Step 2, we have \(G\circ\sigma_\infty = G\).
	By Step 3, \(G\) extends holomorphically across \(D_{02}\) at a generic point, so \(G\) extends holomorphically across \(H_\infty\) at a generic point as well.

	\emph{Step 4: extension to \(\Ps^5\) and constancy.}
	We write \(A\) for the union of the pairwise intersections of the \(21\) boundary hyperplanes in \eqref{eq:boundary-list}.
	The set \(A\) is a finite union of linear subspaces of codimension \(2\) in \(\Ps^5\); in particular, \(A\) is a closed analytic subset of codimension \(2\).
	Every point of \(\Ps^5 \setminus A\) lies in \(X(2,8)\) or is a generic point of exactly one boundary hyperplane.
	By Steps 1, 3, and 3\('\), \(G\) extends holomorphically across every boundary hyperplane at every generic point of that hyperplane.
	Each local extension is unique because it agrees with \(G\) on the dense open intersection of its domain with \(X(2,8)\).
	Therefore the identity theorem shows that these extensions glue to a holomorphic function on \(\Ps^5 \setminus A\).
	By Hartogs' theorem, \(G\) extends holomorphically to \(\Ps^5\).
	Since \(\Ps^5\) is compact and connected, every global holomorphic function on it is constant.
	Hence \(G\) is constant.
\end{proof}

\begin{remark}
	\label{rem:marking}
	If we change the lift \(\tilde x\) by a deck transformation, then \(\eta\) changes by an element of \(\Gamma_{M}\).
	By Proposition~\ref{prop:constancy}, this change does not affect the \ref{thm:main}.
	A change of the marking by an element of \(\Gamma_{U}\) can also permute the forms \(f_J\) and the polynomials \(\hat x_J\).
	By Lemma~\ref{lem:gamma-equivariance}, the collection of identities \eqref{eq:thomae} is preserved under such a change.
\end{remark}

By Proposition~\ref{prop:constancy}, the function \(G\) is a constant, and we write \(\kappa\) for its value.
We have only to evaluate \(G\) at one degenerate configuration.
We use the auxiliary partition
\[
	J^{\ast} = \langle 14;25;36;78 \rangle,
	\qquad
	x_{J^\ast} = (x_1-x_4)(x_2-x_5)(x_3-x_6).
\]
The corresponding automorphic form is
\begin{equation}
	\label{eq:fJstar}
	f_{J^\ast} =
	\left({\vt{111100}{000000}}^2 + {\vt{110000}{001100}}^2\right)
	\left({\vt{000011}{111111}}^2 + {\vt{001111}{110011}}^2\right)
\end{equation}
by entry \#15 of the table in \cite[Theorem 2]{MMN07}.
In that table, \(f_{J^\ast}\) is the entry labeled \(g_3\) with \(\varepsilon = \varepsilon' = +1\).
It is attached to \(+x_{\langle 14;25;36;78\rangle}\) and has the same four characteristics as \(f_{J_3}\); hence \(\hat x_{J^\ast} = x_{J^\ast}\).

\section{Degeneration and Splitting of the Theta Constants}
\label{sec:degeneration}
In this section, we compute the period \(\eta\) at a degenerate configuration, express the constant \(\kappa\) in terms of the value of \(f_{J^\ast}\) there, and split the theta constants into theta constants on \(\mathbb{H}\).

\subsection{The Degenerate Configuration}

We fix \(c \in (0,1)\) and consider the degenerate configuration
\[
	(0,\,0,\,0,\,c,\,c,\,c,\,1,\,\infty),
\]
the limit of \(x \in X(2,8)\) as \(x_2, x_3 \to 0\) and \(x_4, x_5, x_6 \to c\).
The normalization of the limit curve \(w^4 = z^3(z-c)^3(z-1)\), that is, the smooth projective model of this curve, has genus \(3\).
In fact, the map from the normalization to \(\Ps^1\) is of degree four and totally ramified over \(0,c,1,\infty\), so the Riemann--Hurwitz formula gives \(2g-2=4(-2)+4(4-1)=4\) for the genus \(g\) of the normalization, and hence \(g=3\).
Each factor \(z - x_j\) contributes the exponent \(\frac14\) to \(\frac{dz}{w}\), so each of the two triples of coinciding branch points contributes the total exponent \(3\cdot\frac14 = \frac34 < 1\); hence all period integrals converge.
Therefore \(\eta\) extends continuously to the degenerate configuration along the closure of \(X_{\R}\), and we write \(\eta_c\) for the limit value.
We use only this one-sided continuity here, since Proposition~\ref{prop:constancy} shows that the function \(G\) is constant on \(X(2,8)\).

Then we have the following proposition.

\begin{proposition}
	\label{prop:eta-c}
	We have
	\begin{equation}
		\label{eq:eta-c}
		\eta_c = \Transpose{(0,\,0,\,v,\,v,\,v,\,u)},
		\qquad
		v = 2i\mathcal{K}_1,\quad u = 2i(\mathcal{K}_1 - \sqrt{2}\mathcal{K}_2),
	\end{equation}
	where
	\begin{equation}
		\label{eq:K-hypergeometric}
		\begin{gathered}
			\mathcal{K}_1 = \mathcal{K}_1(c) = \int_0^c \frac{dz}{\left( z^3(c-z)^3(1-z) \right)^{1/4}}
			= \frac{\varGamma(1/4)^2}{\sqrt{\pi c}}\,
			F\!\left( \tfrac14,\tfrac14;\tfrac12; c \right),\\
			\mathcal{K}_2 = \mathcal{K}_2(c) = \int_c^1 \frac{dz}{\left( z^3(z-c)^3(1-z) \right)^{1/4}}
			= \sqrt{2}\,\pi\,
			F\!\left( \tfrac34,\tfrac34;1; 1-c \right).
		\end{gathered}
	\end{equation}
	Here \(F\) denotes the Gauss hypergeometric series.
	In both integrands we take the positive real fourth root.
	In particular, we have
	\[
		\Transpose{\eta_c}U\eta_c = u^2 - v^2
		= 8\mathcal{K}_2(\sqrt2 \mathcal{K}_1 - \mathcal{K}_2)
		= -h_{U}(\eta_c, \eta_c).
	\]
\end{proposition}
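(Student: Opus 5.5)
We will compute \(\eta_c\) directly from the formula \(\eta = 2T\,\Transpose{(\mathcal{I}_1,\ldots,\mathcal{I}_6)}\) on the real chamber, passing to the limit \(x_2,x_3\to0\), \(x_4,x_5,x_6\to c\) along the closure of \(X_{\R}\). In this limit the intervals \(\ell_1=[0,x_2]\), \(\ell_2=[x_2,x_3]\), \(\ell_4=[x_4,x_5]\), \(\ell_5=[x_5,x_6]\) shrink to points. The exponents are at most \(3/4<1\), so the integrals \(\mathcal{I}_1,\mathcal{I}_2,\mathcal{I}_4,\mathcal{I}_5\) tend to \(0\) by dominated convergence after rescaling (each is bounded by a constant times the length of the interval raised to a positive power). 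Only \(\mathcal{I}_3\) on \([x_3,x_4]\to[0,c]\) and \(\mathcal{I}_6\) on \([x_6,1]\to[c,1]\) survive.

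Next we read off the phases from the branch convention. On \(\ell_3\) the factor is \(\exp(4\pi i/4)=-1\). The product there becomes \(z^3(c-z)^3(1-z)\), which is positive, so \(\mathcal{I}_3\to-\mathcal{K}_1\). On \(\ell_6\) the factor is \(\exp(\pi i/4)\) and the product is \(z^3(z-c)^3(1-z)\), so \(\mathcal{I}_6\to e^{-\pi i/4}\mathcal{K}_2\). Applying \(2T=2(N_1-iN_2)\), only columns \(3\) and \(6\) contribute. Column \(3\) of \(N_1\) is zero and column \(3\) of \(N_2\) is \(\Transpose{(0,0,1,1,1,1)}\). Column \(6\) of \(N_1\) and of \(N_2\) is \(e_6\). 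Hence
\[
\eta_c=2\bigl(-i(-\mathcal{K}_1)\bigr)\Transpose{(0,0,1,1,1,1)}+2(1-i)e^{-\pi i/4}\mathcal{K}_2\,e_6 .
\]
Since \((1-i)e^{-\pi i/4}=\sqrt2\,e^{-\pi i/2}=-\sqrt2 i\), this gives the components \(2i\mathcal{K}_1=v\) in positions \(3,4,5\) and \(2i\mathcal{K}_1-2\sqrt2 i\mathcal{K}_2=u\) in position \(6\). The main risk is sign bookkeeping in this step. The branch factor \(\exp((7-j)\pi i/4)\) and the orientation of the square root must be checked carefully, and I would double-check it by verifying \(h_U(\eta_c,\eta_c)<0\) at the end.

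The hypergeometric expressions are classical Euler integrals. For \(\mathcal{K}_1\), substitute \(z=ct\) to obtain
\[
\mathcal{K}_1=c^{-1/2}\int_0^1 t^{-3/4}(1-t)^{-3/4}(1-ct)^{-1/4}\,dt=c^{-1/2}B(\tfrac14,\tfrac14)\,F(\tfrac14,\tfrac14;\tfrac12;c),
\]
and \(B(\tfrac14,\tfrac14)=\varGamma(1/4)^2/\sqrt\pi\). For \(\mathcal{K}_2\), substitute \(z=1-(1-c)s\). This gives \(\int_0^1 s^{-1/4}(1-s)^{-3/4}(1-(1-c)s)^{-3/4}\) times the power \((1-c)^{1-3/4-1/4}=1\), which equals
\[
B(\tfrac34,\tfrac14)\,F(\tfrac34,\tfrac34;1;1-c)=\frac{\pi}{\sin(\pi/4)}\,F(\tfrac34,\tfrac34;1;1-c)=\sqrt2\,\pi\,F(\tfrac34,\tfrac34;1;1-c).
\]

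Finally, \(\Transpose{\eta}U\eta=\eta_1^2+\eta_2^2-2\eta_3\eta_4+\eta_5^2+\eta_6^2=-2v^2+v^2+u^2=u^2-v^2\). Writing \(u=2i(\mathcal{K}_1-\sqrt2\mathcal{K}_2)\) and \(v=2i\mathcal{K}_1\), we get
\[
u^2-v^2=-4\bigl((\mathcal{K}_1-\sqrt2\mathcal{K}_2)^2-\mathcal{K}_1^2\bigr)=8\mathcal{K}_2(\sqrt2\mathcal{K}_1-\mathcal{K}_2).
\]
Since \(u\) and \(v\) are purely imaginary, \(h_U(\eta_c,\eta_c)=|u|^2-|v|^2=-(u^2-v^2)\).
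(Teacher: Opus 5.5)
Your proposal is correct and follows the paper's proof essentially step by step. The steps match in order: the vanishing of \(\mathcal{I}_1,\mathcal{I}_2,\mathcal{I}_4,\mathcal{I}_5\), the limits \(\mathcal{I}_3\to-\mathcal{K}_1\) and \(\mathcal{I}_6\to e^{-\pi i/4}\mathcal{K}_2\) read off from the branch factor \(\exp((7-j)\pi i/4)\), the application of \(2T=2(N_1-iN_2)\) (equivalently \(\int_{A_k}\frac{dz}{w}=2\mathcal{I}_k\) and \(\int_{B_k}\frac{dz}{w}=-2i\mathcal{I}_k\)), and the Euler integral representation with \(\varGamma(1/4)\varGamma(3/4)=\sqrt2\,\pi\). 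Your explicit substitutions \(z=ct\) and \(z=1-(1-c)s\), together with the bound for the shrinking intervals, supply details the paper leaves implicit.
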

\begin{proof}
	In the limit, the intervals \(\ell_1, \ell_2, \ell_4, \ell_5\) shrink to points and \(\mathcal{I}_1 = \mathcal{I}_2 = \mathcal{I}_4 = \mathcal{I}_5 = 0\).
	In the same limit, the branch of \(w\) fixed in Section~\ref{sec:preliminaries} gives \(\mathcal{I}_3 = -\mathcal{K}_1\) on \(\ell_3 = [0,c]\) and \(\mathcal{I}_6 = \exp(-\pi i/4)\mathcal{K}_2\) on \(\ell_6 = [c,1]\).
	As in Section~\ref{sec:preliminaries}, the chains of Fact~\ref{fact:MT04-periods} satisfy \(\int_{A_k} \frac{dz}{w} = 2\,\mathcal{I}_k\) and \(\int_{B_k} \frac{dz}{w} = -2i\,\mathcal{I}_k\).
	Thus the cycles of Fact~\ref{fact:MT04-periods} give \(\eta_1 = \eta_2 = 0\), \(\eta_3 = \eta_4 = \eta_5 = -2i\mathcal{I}_3 = v\), and \(\eta_6 = -2i\mathcal{I}_3 + 2(1-i)\mathcal{I}_6 = u\).
	The expressions \eqref{eq:K-hypergeometric} follow from the Euler integral representation of the hypergeometric series, together with \(\varGamma(3/4)\varGamma(1/4) = \sqrt{2}\,\pi\).
	Since \(u\) and \(v\) are purely imaginary, we have \(\Transpose{\eta_c}U\eta_c = -h_{U}(\eta_c,\eta_c)\), and \eqref{eq:eta-c} gives the remaining equalities in the statement.
\end{proof}

We set
\[
	m=\frac{1-\sqrt{c}}{2}\in(0,\tfrac12),
	\qquad
	\mathcal{F}(z)=F(\tfrac12,\tfrac12;1;z).
\]
Then we have the following lemma.

\begin{lemma}
	\label{lem:eta-c-positivity}
	We have
	\begin{equation}
		\label{eq:K-m}
		\mathcal{K}_2=\frac{\sqrt2\,\pi}{\sqrt{c}}\mathcal{F}(m),
		\qquad
		\sqrt2\mathcal{K}_1
		=\frac{\sqrt2\,\pi}{\sqrt{c}}\bigl(\mathcal{F}(m)+\mathcal{F}(1-m)\bigr).
	\end{equation}
	In particular, \(\sqrt2\mathcal{K}_1>\mathcal{K}_2\) and \(\Transpose{\eta_c}U\eta_c>0\).
\end{lemma}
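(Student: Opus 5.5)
The plan is to derive both identities in \eqref{eq:K-m} from the hypergeometric expressions \eqref{eq:K-hypergeometric} of Proposition~\ref{prop:eta-c}, using classical quadratic transformations. Throughout I use
\[
	4m(1-m)=(1-\sqrt c)(1+\sqrt c)=1-c,
	\qquad
	m=\tfrac{1-\sqrt c}{2},\quad 1-m=\tfrac{1+\sqrt c}{2}.
\]

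\emph{The identity for \(\mathcal{K}_2\).} First apply Euler's transformation \(F(a,b;c';x)=(1-x)^{c'-a-b}F(c'-a,c'-b;c';x)\) with \(a=b=\tfrac34\), \(c'=1\). This gives
\[
	F(\tfrac34,\tfrac34;1;1-c)=c^{-1/2}F(\tfrac14,\tfrac14;1;1-c).
\]
Next use the quadratic transformation \(F(a,b;a+b+\tfrac12;4z(1-z))=F(2a,2b;a+b+\tfrac12;z)\), valid for \(0\le z\le\tfrac12\). With \(a=b=\tfrac14\) and \(z=m\), it gives \(F(\tfrac14,\tfrac14;1;1-c)=\mathcal{F}(m)\). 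Hence \(\mathcal{K}_2=\sqrt2\,\pi\,c^{-1/2}\mathcal{F}(m)\).

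\emph{The identity for \(\mathcal{K}_1\).} Here I would use the quadratic formula expressing a series with \(c'=\tfrac12\) as a sum over the two points \((1\mp x)/2\):
\[
	F(a,b;\tfrac12;x^2)=\frac{\varGamma(a+\tfrac12)\varGamma(b+\tfrac12)}{2\,\varGamma(\tfrac12)\varGamma(a+b+\tfrac12)}
	\Bigl(F\bigl(2a,2b;a+b+\tfrac12;\tfrac{1-x}{2}\bigr)+F\bigl(2a,2b;a+b+\tfrac12;\tfrac{1+x}{2}\bigr)\Bigr).
\]
I take \(a=b=\tfrac14\) and \(x=\sqrt c\). The prefactor is then \(\varGamma(3/4)^2/(2\sqrt\pi)\), and the bracket is \(\mathcal{F}(m)+\mathcal{F}(1-m)\). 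Combining with \(\mathcal{K}_1=\varGamma(1/4)^2F(\tfrac14,\tfrac14;\tfrac12;c)/\sqrt{\pi c}\) and \(\varGamma(1/4)\varGamma(3/4)=\sqrt2\,\pi\) yields \(\mathcal{K}_1=\pi c^{-1/2}(\mathcal{F}(m)+\mathcal{F}(1-m))\). Multiplying by \(\sqrt2\) gives the stated formula.

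\emph{Main obstacle: justifying the quadratic formula for \(\mathcal{K}_1\).} If I do not simply cite it, I would verify it directly. The right-hand side, as a function of \(x\), is even, because \(x\mapsto-x\) swaps the two terms. Under \(t=(1-x)/2\), each summand satisfies the hypergeometric equation with parameters \(2a,2b;a+b+\tfrac12\). A change of variables \(x^2=y\) should turn this into the equation with parameters \(a,b;\tfrac12\), so the right-hand side solves that equation in \(y=x^2\). Evenness forces it to be the solution holomorphic at \(y=0\), so it is a constant multiple of \(F(a,b;\tfrac12;y)\). The constant is fixed by evaluating at \(x=0\), using Gauss's value \(F(2a,2b;a+b+\tfrac12;\tfrac12)=\sqrt\pi\,\varGamma(a+b+\tfrac12)/(\varGamma(a+\tfrac12)\varGamma(b+\tfrac12))\).

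\emph{The positivity claims.} These follow at once from the two identities:
\[
	\sqrt2\mathcal{K}_1-\mathcal{K}_2=\frac{\sqrt2\,\pi}{\sqrt c}\,\mathcal{F}(1-m)>0,
\]
since \(\mathcal{F}\) has positive coefficients and \(1-m\in(\tfrac12,1)\). Also \(\mathcal{K}_2>0\). Hence, by Proposition~\ref{prop:eta-c}, \(\Transpose{\eta_c}U\eta_c=8\mathcal{K}_2(\sqrt2\mathcal{K}_1-\mathcal{K}_2)>0\).
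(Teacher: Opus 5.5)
Your proof is correct and follows the paper's route. For \(\mathcal{K}_2\) you use Euler's transformation and then the quadratic transformation at \(z=m\). For \(\mathcal{K}_1\) you use the symmetric identity \(F(\tfrac14,\tfrac14;\tfrac12;(1-2m)^2)=\frac{\varGamma(3/4)^2}{2\sqrt{\pi}}\bigl(\mathcal{F}(m)+\mathcal{F}(1-m)\bigr)\), and the positivity argument is the same.

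The only difference is how that identity is justified. The paper takes the two-term connection formula for \(\mathcal{F}\bigl(\tfrac{1-z}{2}\bigr)\), adds it at \(z\) and \(-z\), and lets the odd terms cancel. Your argument (evenness plus uniqueness of the solution holomorphic at \(y=0\) for \(y=x^2\), normalized at \(x=0\) by Gauss's second summation theorem) is a valid self-contained alternative.
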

\begin{proof}
	We use Euler's transformation and the quadratic transformation given in \cite[Section~2.9, formula~(2), and Section~2.11, formula~(2)]{EMOT53}.
	Applying them with the parameters \(\tfrac14,\tfrac14;\tfrac12\), with \(1-c=4m(1-m)\), and with the positive real branches, we obtain
	\[
		F\!\left(\tfrac34,\tfrac34;1;1-c\right)
		=\frac{\mathcal{F}(m)}{1-2m}
		=\frac{\mathcal{F}(m)}{\sqrt{c}}.
	\]
	The expression for \(\mathcal{K}_2\) in \eqref{eq:K-hypergeometric} gives the first equality in \eqref{eq:K-m}.

	We also use the connection formula
	\begin{equation}
		\label{eq:T2}
		F\!\left(\tfrac14,\tfrac14;\tfrac12;(1-2m)^2\right)
		=\frac{\pi^{3/2}}{\varGamma(1/4)^2}
		\bigl(\mathcal{F}(m)+\mathcal{F}(1-m)\bigr),
		\qquad 0<m<1.
	\end{equation}
	To verify \eqref{eq:T2}, we use the identity
	\[
		\mathcal{F}\!\left(\frac{1-z}{2}\right)
		=\frac{\sqrt{\pi}}{\varGamma(3/4)^2}
		F\!\left(\tfrac14,\tfrac14;\tfrac12;z^2\right)
		-\frac{2\sqrt{\pi}}{\varGamma(1/4)^2}
		zF\!\left(\tfrac34,\tfrac34;\tfrac32;z^2\right),
	\]
	which is \cite[Section~2.11, formula~(3)]{EMOT53}.
	We apply this identity at \(z=1-2m\) and at \(z=2m-1\), and add the resulting identities.
	For \(-1<z<1\), all branches in this identity are obtained by analytic continuation of the defining series along the real interval from \(z=0\), and none of their arguments crosses the standard cut \([1,\infty)\).
	The terms that are odd in \(z\) cancel, and the identity \(\varGamma(3/4)\varGamma(1/4)=\sqrt{2}\,\pi\) gives \eqref{eq:T2}.
	Substituting \eqref{eq:T2} into the expression for \(\mathcal{K}_1\) in \eqref{eq:K-hypergeometric}, we obtain the second equality in \eqref{eq:K-m}.
	Since \(\mathcal{F}(m)\) and \(\mathcal{F}(1-m)\) are positive, we obtain \(\mathcal{K}_2>0\) and \(\sqrt2\mathcal{K}_1>\mathcal{K}_2\) from \eqref{eq:K-m}.
	The identity \(\Transpose{\eta_c}U\eta_c = 8\mathcal{K}_2(\sqrt2 \mathcal{K}_1 - \mathcal{K}_2)\) of Proposition~\ref{prop:eta-c} then gives \(\Transpose{\eta_c}U\eta_c>0\).
\end{proof}

\subsection{Reduction to a Single Value}

By Lemma~\ref{lem:eta-c-positivity} and Fact~\ref{fact:iota}(1), the matrix \(\imath(\eta_c)\) lies in \(\mathfrak{S}_6\), and the theta constants and \(f_{J^\ast}\) also extend continuously to the degenerate configuration.
Since the first two coordinates of \(\eta_c\) vanish, a direct computation from the definition of \(\imath\) shows that \(\imath(\eta_c)\) is block diagonal:
\begin{equation*}
	\imath(\eta_c) = iI_2 \oplus \tau_c,
	\qquad
	\tau_c = -\frac{i}{u^2-v^2}
	\begin{pmatrix}
		2v^2    & u^2+v^2 & 2v^2     & 2uv     \\
		u^2+v^2 & 2v^2    & 2v^2     & 2uv     \\
		2v^2    & 2v^2    & 3v^2-u^2 & 2uv     \\
		2uv     & 2uv     & 2uv      & u^2+v^2
	\end{pmatrix}.
\end{equation*}
Since \(\imath(\eta_c)\) lies in \(\mathfrak{S}_6\) and is block diagonal, each of its diagonal blocks has positive definite imaginary part.
In particular, the block \(\tau_c\) lies in \(\mathfrak{S}_4\).
Every factor of \(G\) therefore extends continuously to the degenerate configuration, and \(x_{J^\ast}(0,0,0,c,c,c,1,\infty) = (0-c)^3 = -c^3 \neq 0\).
Letting \(x\) tend to this configuration in the identity \(G = \kappa\) of Section~\ref{sec:constancy}, we obtain
\begin{equation}
	\label{eq:kappa-anchor}
	\kappa = \frac{f_{J^\ast}(\eta_c)}{(\Transpose{\eta_c}U\eta_c)^2\,(-c^3)}.
\end{equation}
In the rest of this section and in Section~\ref{sec:evaluation}, we compute the numerator and the denominator of \eqref{eq:kappa-anchor}.

\subsection{Splitting of the Theta Constants}

The defining series of each theta constant splits along the block decomposition \(\imath(\eta_c) = iI_2 \oplus \tau_c\).
For every characteristic, writing \(a = (a_1, a_2, a')\) and \(b = (b_1,b_2,b')\) with \(a', b' \in \Z^4\), we have
\begin{equation}
	\label{eq:theta-split-trivial}
	\vt{a}{b}(\eta_c)
	= \vartheta_{a_1b_1}(i)\,\vartheta_{a_2b_2}(i)\,\vt{a'}{b'}(\tau_c).
\end{equation}
Applying \eqref{eq:theta-split-trivial} to the four characteristics of \eqref{eq:fJstar} and using \(\vartheta_{10}(i)^2 = \vartheta_{01}(i)^2 = \vartheta_{00}(i)^2/\sqrt{2}\), which follows from \eqref{eq:classical-4} below, we obtain
\begin{equation}
	\label{eq:fJstar-blocked}
	f_{J^\ast}(\eta_c)
	= \frac{\vartheta_{00}(i)^{8}}{4}
	\left( \Theta_1^2 + \Theta_2^2 \right)
	\left( \Theta_3^2 + \Theta_4^2 \right),
\end{equation}
where \(\Theta_1 = \vt{1100}{0000}(\tau_c)\), \(\Theta_2 = \vt{0000}{1100}(\tau_c)\), \(\Theta_3 = \vt{0011}{1111}(\tau_c)\), and \(\Theta_4 = \vt{1111}{0011}(\tau_c)\).

We set \(\tau^\ast = -i(u+v)/(u-v)\), with \(u\) and \(v\) as in Proposition~\ref{prop:eta-c}.
By Lemma~\ref{lem:eta-c-positivity}, we have \(\tau^\ast,-1/\tau^\ast\in i\R_{>0}\).
We use these positive imaginary values to fix the branches of the square roots in \eqref{eq:classical-3}, as stated after \eqref{eq:classical-5} below.

Then we have the following proposition.

\begin{proposition}
	\label{prop:splitting}
	For \(n=(n_1,n_2,n_3,n_4)\in\Z^4\), we set
	\[
		r=(r_1,r_2,r_3,r_4)=n\Transpose{L},
		\qquad
		L=
		\begin{pmatrix}
			1 & 1  & 1 & 1  \\
			1 & 1  & 1 & -1 \\
			1 & 1  & 2 & 0  \\
			1 & -1 & 0 & 0
		\end{pmatrix},
		\qquad
		\det L=-4.
	\]
	Then \(n\mapsto r\) is a bijection of \(\Z^4\) onto the index-\(4\) sublattice
	\[
		\{r\in\Z^4\mid r_1\equiv r_2,\ r_3\equiv r_4\pmod 2\},
	\]
	with inverse
	\[
		\begin{aligned}
			n_1 &= \frac{r_1+r_2-r_3+r_4}{2}, &
			n_2 &= \frac{r_1+r_2-r_3-r_4}{2}, \\
			n_3 &= r_3 - \frac{r_1+r_2}{2}, &
			n_4 &= \frac{r_1-r_2}{2}.
		\end{aligned}
	\]
	Moreover, we have
	\[
		\frac{1}{2}\,n\,\tau_c\,\Transpose{n}
		= \frac{\tau^\ast}{4}r_1^2-\frac{1}{4\tau^\ast}r_2^2+\frac{i}{4}r_3^2+\frac{i}{4}r_4^2.
	\]
	For \(a,b\in\Z^4\), we set \(\alpha=a\Transpose{L}\) and \(\beta=bL^{-1}\).
	The components of \(\alpha\) and \(\beta\) are
	\begin{align*}
		(\alpha_1,\alpha_2,\alpha_3,\alpha_4)
		 & = (a_1{+}a_2{+}a_3{+}a_4,\ a_1{+}a_2{+}a_3{-}a_4,\ a_1{+}a_2{+}2a_3,\ a_1{-}a_2), \\
		(\beta_1,\beta_2,\beta_3,\beta_4)
		 & = \left( \tfrac{b_1+b_2-b_3+b_4}{2},\ \tfrac{b_1+b_2-b_3-b_4}{2},\
		b_3 - \tfrac{b_1+b_2}{2},\ \tfrac{b_1-b_2}{2} \right).
	\end{align*}
	Consequently, we have
	\begin{equation}
		\label{eq:general-split}
		\begin{aligned}
			\vt{a}{b}(\tau_c)
			= \frac{1}{4}\sum_{\nu_1,\nu_2 \in \{0,1\}}
			& \e\left( -\frac{(\alpha_1{+}\alpha_2)\nu_1 + (\alpha_3{+}\alpha_4)\nu_2}{4} \right)
			\vartheta_{\alpha_1,\beta_1{+}\nu_1}\!\left(\tfrac{\tau^\ast}{2}\right) \\
			& \times
			\vartheta_{\alpha_2,\beta_2{+}\nu_1}\!\left(-\tfrac{1}{2\tau^\ast}\right)
			\vartheta_{\alpha_3,\beta_3{+}\nu_2}\!\left(\tfrac{i}{2}\right)
			\vartheta_{\alpha_4,\beta_4{+}\nu_2}\!\left(\tfrac{i}{2}\right).
		\end{aligned}
	\end{equation}
\end{proposition}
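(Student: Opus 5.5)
The proof splits into three parts: the lattice statements, the diagonalization of the quadratic form, and the resummation of the theta series.

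\emph{Lattice statements.} Compute $r=n\Transpose{L}$ directly:
\[
r=(n_1+n_2+n_3+n_4,\ n_1+n_2+n_3-n_4,\ n_1+n_2+2n_3,\ n_1-n_2).
\]
Then $r_1-r_2=2n_4$ and $r_3-r_4=2n_2+2n_3$ are even, so the image lies in the stated sublattice. Conversely, solving the linear system gives the displayed inverse formulas. One then checks that these are integers exactly when $r_1\equiv r_2$ and $r_3\equiv r_4\pmod 2$. For example, $r_1+r_2-r_3+r_4\equiv (r_1+r_2)+(r_4-r_3)\equiv 0$, and $(r_1+r_2)/2\in\Z$. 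The index equals $|\det L|=4$. The formulas for $\alpha$ and $\beta$ are just $a\Transpose{L}$ and $bL^{-1}$, where $L^{-1}$ is read off from the inverse map.

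\emph{Quadratic form.} Write $\tau_c=-\frac{i}{u^2-v^2}Q$ and express $Q$ in the basis adapted to $L$. Expand $nQ\Transpose{n}$ with the explicit entries of $\tau_c$ and compare it with
\[
\frac{-i(u+v)}{2(u-v)}r_1^2+\frac{(u-v)}{2i(u+v)}\cdot(-1)\cdot\ldots
\]
More cleanly, observe that
\[
\frac{\tau^\ast}{4}r_1^2-\frac{1}{4\tau^\ast}r_2^2=-\frac{i}{4(u^2-v^2)}\bigl((u+v)^2r_1^2-(u-v)^2r_2^2\bigr).
\]
So it suffices to verify the polynomial identity
\[
\tfrac12\, nQ\Transpose{n}=\tfrac14\bigl((u+v)^2r_1^2-(u-v)^2r_2^2\bigr)-\tfrac14(u^2-v^2)(r_3^2+r_4^2).
\]
Do this by comparing the coefficients of $u^2$, $v^2$ and $uv$ as quadratic forms in $n$. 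This is routine but is the step most prone to sign errors, so I would check it on the basis vectors $e_j$ and on $e_j+e_k$.

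\emph{Resummation.} Substitute $k+\frac a2$ with $k\in\Z^4$ and set $r=(2k+a)\Transpose{L}/2$, so that $r\in\frac12(\alpha+2\Lambda)$, where $\Lambda$ is the sublattice above. The exponent becomes the diagonal form in $r$. For the linear term, $(k+\frac a2)\Transpose{(\frac b2)}=r\,\Transpose{(\beta/2)}$, because $\Transpose{(L^{-1})}$ transforms $b$ into $\beta$: indeed $nb^{T}=n\Transpose{L}(\Transpose{L})^{-1}\Transpose{b}=r\,\Transpose{(bL^{-1})}$. Now detect the congruence conditions defining $\Lambda$ with the character sum
\[
\frac14\sum_{\nu_1,\nu_2}(-1)^{\nu_1(m_1-m_2)+\nu_2(m_3-m_4)},
\]
where $r=\frac12(\alpha+2m)$ and $m\in\Z^4$ runs freely. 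Writing $(-1)^{\nu_1(m_1-m_2)}=\e(\nu_1(m_1+m_2)/2)\,\e(-\nu_1 m_2)$, the second factor is trivial, and $\e(\nu_1 m_1/2)$ shifts $\beta_1$ to $\beta_1+\nu_1$. Each coordinate then gives a one-dimensional theta series: the $r_1^2$ coefficient $\tau^\ast/4$ corresponds to $\vartheta(\tau^\ast/2)$ under the convention $\frac12(m+\frac{\alpha}{2})^2\tau$, and similarly for the others. Pulling out the constant $\e(\cdot)$ factors produced by $\alpha$ in these shifts gives the phase $\e\bigl(-((\alpha_1+\alpha_2)\nu_1+(\alpha_3+\alpha_4)\nu_2)/4\bigr)$.

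Tracking these phases exactly, including the sign conventions for the $m_2$ and $m_4$ shifts, is the main bookkeeping obstacle; I would verify the result at $a=b=0$ as a consistency check.
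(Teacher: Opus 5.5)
Your lattice argument and your resummation (detecting $m_1\equiv m_2$, $m_3\equiv m_4\pmod 2$ with the character sum, factoring into four one-variable series, and collecting the phases $\e(-\alpha_j\nu/4)$) are the paper's argument and are correct. The problem is in the quadratic-form step, where the identity you state is false:
\[
\frac{\tau^\ast}{4}r_1^2-\frac{1}{4\tau^\ast}r_2^2=-\frac{i}{4(u^2-v^2)}\bigl((u+v)^2r_1^2-(u-v)^2r_2^2\bigr).
\]
Since $1/\tau^\ast=i(u-v)/(u+v)$, you get $-\frac{1}{4\tau^\ast}=-\frac{i(u-v)^2}{4(u^2-v^2)}$, so the $r_2^2$ term must enter with a plus sign. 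The polynomial identity you say suffices is therefore also false, and the coefficient check you plan would fail. At $n=e_4$, where $r=(1,-1,0,0)$, the left side is $\frac12(u^2+v^2)$, while your right side is $uv$. A sanity check also exposes the slip. With your sign, the $r_2^2$ coefficient of $\frac12 n\tau_c\Transpose{n}$ would be $+\frac{1}{4\tau^\ast}$. Because $\tau^\ast\in i\R_{>0}$, this has negative imaginary part, contradicting $\tau_c\in\mathfrak{S}_4$, since $n\mapsto r$ is a real invertible change of coordinates.

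The correct target is
\[
\tfrac12\, nQ\Transpose{n}=\tfrac14\bigl((u+v)^2r_1^2+(u-v)^2r_2^2\bigr)-\tfrac14(u^2-v^2)(r_3^2+r_4^2),
\]
and it does hold. Put $s=n_1+n_2+n_3$, so that $r_1=s+n_4$ and $r_2=s-n_4$. Then twice the right side equals
\[
(u^2+v^2)(s^2+n_4^2)+4uv\,sn_4-(u^2-v^2)\bigl(n_1^2+n_2^2+2(n_1+n_2)n_3+2n_3^2\bigr).
\]
Its coefficients are exactly those of $nQ\Transpose{n}$: $2v^2$ for $n_1^2$ and $n_2^2$; $3v^2-u^2$ for $n_3^2$; $u^2+v^2$ for $n_4^2$; $2(u^2+v^2)$ for $n_1n_2$; $4v^2$ for $n_1n_3$ and $n_2n_3$; and $4uv$ for $n_jn_4$. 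This is the same substitution the paper performs. Once you correct the sign, and delete the unfinished first display in that paragraph, your proof coincides with the paper's.
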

\begin{proof}
	Substituting the expressions for \(n_1,n_2,n_3,n_4\) into the quadratic form, we obtain
	\[
		\frac{1}{2i}\,n\,\tau_c\,\Transpose{n}
		=-\frac{u+v}{4(u-v)}r_1^2-\frac{u-v}{4(u+v)}r_2^2+\frac{r_3^2}{4}+\frac{r_4^2}{4}.
	\]
	The definition of \(\tau^\ast\) gives the claimed quadratic identity.
	Every element of the image of \(n\mapsto r\) satisfies \(r_1\equiv r_2\) and \(r_3\equiv r_4\pmod 2\), since \(r_1-r_2=2n_4\) and \(r_3-r_4=2(n_2+n_3)\).
	Conversely, suppose that \(r_1\equiv r_2\) and \(r_3\equiv r_4\pmod 2\).
	Then the four displayed expressions for \(n_1,n_2,n_3,n_4\) are integers, and substituting them back, we recover \(r\), which proves that \(n\mapsto r\) is a bijection.
	Since \(r+\alpha/2=(n+a/2)\Transpose{L}\) and \(\beta=bL^{-1}\), the defining series becomes
	\[
		\begin{aligned}
			\vt{a}{b}(\tau_c)
			&=\sum_{\substack{r\in\Z^4\\ r_1\equiv r_2\pmod 2\\ r_3\equiv r_4\pmod 2}}
			\e\Bigg(
			\frac{\tau^\ast}{4}\left(r_1+\frac{\alpha_1}{2}\right)^2
			-\frac{1}{4\tau^\ast}\left(r_2+\frac{\alpha_2}{2}\right)^2 \\
			&\qquad
			+\frac{i}{4}\sum_{j=3}^4\left(r_j+\frac{\alpha_j}{2}\right)^2
			+\frac12\sum_{j=1}^4\left(r_j+\frac{\alpha_j}{2}\right)\beta_j
			\Bigg).
		\end{aligned}
	\]
	For integers \(r_1,r_2,r_3,r_4\), we have
	\[
		\begin{aligned}
			\frac12\sum_{\nu_1\in\{0,1\}}\e\left(\frac{\nu_1(r_1+r_2)}{2}\right)&=\frac12\left(1+(-1)^{r_1+r_2}\right), \\
			\frac12\sum_{\nu_2\in\{0,1\}}\e\left(\frac{\nu_2(r_3+r_4)}{2}\right)&=\frac12\left(1+(-1)^{r_3+r_4}\right).
		\end{aligned}
	\]
	The \(\nu_1\)-sum is \(1\) if \(r_1\equiv r_2\pmod 2\) and \(0\) otherwise, and the \(\nu_2\)-sum has the same property for \(r_3\) and \(r_4\).
	Multiplying the summand by these two sums, we replace the constrained sum by a sum over all \(r\in\Z^4\) and obtain the factor \(1/4\).
	For fixed \(\nu_1,\nu_2\in\{0,1\}\), the sum over \(r\in\Z^4\) factors into four sums over \(r_1\), \(r_2\), \(r_3\), and \(r_4\).
	For example, the sum over \(r_1\) is
	\[
		\begin{aligned}
			&\sum_{r_1\in\Z}
			\e\left(
			\frac{\tau^\ast}{4}\left(r_1+\frac{\alpha_1}{2}\right)^2
			+\frac{\beta_1}{2}\left(r_1+\frac{\alpha_1}{2}\right)
			+\frac{\nu_1 r_1}{2}
			\right) \\
			&\qquad
			=\e\left(-\frac{\alpha_1\nu_1}{4}\right)
			\vartheta_{\alpha_1,\beta_1+\nu_1}\!\left(\frac{\tau^\ast}{2}\right).
		\end{aligned}
	\]
	Here \(\nu_1 r_1/2=\nu_1(r_1+\alpha_1/2)/2-\alpha_1\nu_1/4\), which produces the factor \(\e(-\alpha_1\nu_1/4)\).
	The sums over \(r_2\), \(r_3\), and \(r_4\) give the remaining three theta constants in \eqref{eq:general-split}, together with the factors \(\e(-\alpha_2\nu_1/4)\), \(\e(-\alpha_3\nu_2/4)\), and \(\e(-\alpha_4\nu_2/4)\), respectively.
	The product of the four factors is
	\[
		\e\left(-\frac{(\alpha_1+\alpha_2)\nu_1+(\alpha_3+\alpha_4)\nu_2}{4}\right),
	\]
	which gives \eqref{eq:general-split}.
\end{proof}

Hereafter, we use the following classical identities for Jacobi's theta constants; cf.\ \cite[Chapter 2]{BB98}.
We define the elliptic modular function \(\lambda\) by
\[
	\lambda(\tau) = \frac{\vartheta_{10}(\tau)^4}{\vartheta_{00}(\tau)^4},
	\qquad \tau \in \mathbb{H}.
\]
The function \(\lambda\) restricts to a bijection from \(i\R_{>0}\) onto \((0,1)\).
For \(\tau \in \mathbb{H}\), we have
\begin{gather}
	\label{eq:classical-1}
	\vartheta_{00}(\tau) = \vartheta_{00}(4\tau) + \vartheta_{10}(4\tau),
	\qquad
	\vartheta_{01}(\tau) = \vartheta_{00}(4\tau) - \vartheta_{10}(4\tau),
	\\
	\label{eq:classical-2}
	\vartheta_{01}(\tau)^2 = \vartheta_{00}(2\tau)^2 - \vartheta_{10}(2\tau)^2,
	\qquad
	\vartheta_{10}(\tau)^2 = 2\,\vartheta_{00}(2\tau)\vartheta_{10}(2\tau),
	\\
	\label{eq:classical-3}
	\vartheta_{00}(-1/\tau) = \sqrt{-i\tau}\;\vartheta_{00}(\tau),
	\qquad
	\vartheta_{01}(-1/\tau) = \sqrt{-i\tau}\;\vartheta_{10}(\tau),
	\\
	\label{eq:classical-4}
	\vartheta_{00}(i) = \frac{\pi^{1/4}}{\varGamma(3/4)},
	\qquad
	\vartheta_{01}(i) = \vartheta_{10}(i) = 2^{-1/4}\,\vartheta_{00}(i),
	\\
	\label{eq:classical-5}
	\vartheta_{00}(\tau)^4 = \vartheta_{01}(\tau)^4 + \vartheta_{10}(\tau)^4,
	\qquad
	\vartheta_{00}(\tau)^2 = F\!\left(\tfrac12,\tfrac12;1;\lambda(\tau)\right),
\end{gather}
where \(\sqrt{-i\tau}\) denotes the principal branch, which is real and positive for \(\tau \in i\R_{>0}\).
All the moduli to which \eqref{eq:classical-3} is applied below lie in \(i\R_{>0}\).
In \eqref{eq:classical-5}, the two identities are due to Jacobi.
The identities \eqref{eq:classical-1} and the first identity in \eqref{eq:classical-2} are Equation~(2.1.7i) and Equation~(2.1.9) of \cite{BB98}, respectively; the first identity in \eqref{eq:classical-5} is given in \cite[pp.~33--35]{BB98}.
The transformation \eqref{eq:classical-3} is the functional equation of Jacobi's theta constants; see \cite[Chapter I, Section 7]{Mum07a}.
The value \(\vartheta_{00}(i)\) in \eqref{eq:classical-4} is given in \cite{CM23}.
By Proposition~\ref{prop:splitting}, we obtain the following corollary.

\begin{corollary}
	\label{cor:four-thetas}
	We have
	\begin{align*}
		\Theta_1 = \Theta_2
		 & = \frac{\sqrt2}{4}\,\vartheta_{00}(i)^2
		\left[ \vartheta_{00}\!\left(\tfrac{\tau^\ast}{2}\right)\vartheta_{00}\!\left(-\tfrac{1}{2\tau^\ast}\right)
			+ \vartheta_{01}\!\left(\tfrac{\tau^\ast}{2}\right)\vartheta_{01}\!\left(-\tfrac{1}{2\tau^\ast}\right) \right],
		\\
		\Theta_3 = -\Theta_4
		 & = \frac{\sqrt2}{4}\,\vartheta_{00}(i)^2
		\left[ \vartheta_{01}\!\left(\tfrac{\tau^\ast}{2}\right)\vartheta_{00}\!\left(-\tfrac{1}{2\tau^\ast}\right)
			- \vartheta_{00}\!\left(\tfrac{\tau^\ast}{2}\right)\vartheta_{01}\!\left(-\tfrac{1}{2\tau^\ast}\right) \right].
	\end{align*}
\end{corollary}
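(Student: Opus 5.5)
The plan is to specialize \eqref{eq:general-split} of Proposition~\ref{prop:splitting} to the four characteristics defining \(\Theta_1,\ldots,\Theta_4\). For each one I compute \(\alpha=a\Transpose{L}\) and \(\beta=bL^{-1}\) from the formulas given there:
\begin{itemize}
\item for \(\Theta_1\) (\(a=(1,1,0,0)\), \(b=0\)): \(\alpha=(2,2,2,0)\), \(\beta=0\);
\item for \(\Theta_2\) (\(a=0\), \(b=(1,1,0,0)\)): \(\alpha=0\), \(\beta=(1,1,-1,0)\);
\item for \(\Theta_3\) (\(a=(0,0,1,1)\), \(b=(1,1,1,1)\)): \(\alpha=(2,0,2,0)\), with \(\beta\) read off from the same formula;
\item for \(\Theta_4\) (\(a=(1,1,1,1)\), \(b=(0,0,1,1)\)): \(\alpha\) and \(\beta\) likewise.
\end{itemize}
In each case I write out the four terms \((\nu_1,\nu_2)\in\{0,1\}^2\) with their phases \(\e(-\cdots/4)\).

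Next I reduce all characteristics to standard ones with entries in \(\{0,1\}\), using elementary rules that follow directly from \eqref{eq:theta-series} by reindexing \(k\mapsto k+1\) or \(k\mapsto -k\):
\begin{gather*}
\vartheta_{a+2,b}=\vartheta_{ab},\qquad
\vartheta_{a,b+2}=(-1)^a\vartheta_{ab},\\
\vartheta_{-a,-b}=\vartheta_{ab},\qquad
\vartheta_{11}=0.
\end{gather*}
The vanishing of \(\vartheta_{11}\) removes some of the \((\nu_1,\nu_2)\)-terms. After this reduction, each \(\Theta_k\) becomes a signed sum of products of the form \(\vartheta_{0\nu_1}(\tau^\ast/2)\,\vartheta_{0\nu_1'}(-1/(2\tau^\ast))\) times a product of two theta constants at \(i/2\).

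The \(\nu_2\)-sum, or a single surviving \(\nu_2\)-term, then collects the factors at \(i/2\) into one of a few fixed constants, such as \(\vartheta_{00}(i/2)^2-\vartheta_{01}(i/2)^2\) or \(\vartheta_{00}(i/2)\vartheta_{01}(i/2)\). These are evaluated in terms of \(\vartheta_{00}(i)\) using \eqref{eq:classical-1}--\eqref{eq:classical-4}. For instance, combining \eqref{eq:classical-1} at \(\tau=i/2\) with the duplication \(\vartheta_{00}(\tau)^2=\vartheta_{00}(2\tau)^2+\vartheta_{10}(2\tau)^2\) (which follows from \eqref{eq:classical-1} and \eqref{eq:classical-2}) gives
\[
\vartheta_{00}(i/2)^2-\vartheta_{01}(i/2)^2=2\vartheta_{10}(i)^2=\sqrt2\,\vartheta_{00}(i)^2 .
\]
The second identity of \eqref{eq:classical-2} handles products like \(\vartheta_{00}(i/2)\vartheta_{01}(i/2)\), and \(\vartheta_{01}(i/2)^2=\vartheta_{00}(i)^2-\vartheta_{10}(i)^2\) handles the remaining cases. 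In each case the constant should come out as \(\sqrt2\,\vartheta_{00}(i)^2\), which produces the prefactor \(\tfrac{\sqrt2}{4}\vartheta_{00}(i)^2\). What remains in \(\nu_1\) is the bracketed combination of \(\vartheta_{00},\vartheta_{01}\) at \(\tau^\ast/2\) and \(-1/(2\tau^\ast)\).

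For \(\Theta_1=\Theta_2\) and \(\Theta_3=-\Theta_4\), I expect the two computations to produce literally the same bracket, possibly after interchanging the roles of the \(\nu_1=0,1\) terms. Otherwise I will compare the final expressions directly.

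The main obstacle is sign and phase bookkeeping. The half-integral entries of \(\beta\) (for \(\Theta_2\), \(\Theta_3\), \(\Theta_4\)) give characteristics \(\vartheta_{a,b}\) with non-integral \(b\), which are not among the standard four. For these I first apply the quarter-phase reindexing \(k\mapsto -k-a\), or pair them with the phases \(\e(-\cdots/4)\), so that they combine into standard constants or cancel in pairs. I would handle \(\Theta_1\) first, since it has integral \(\beta\), as a check on the normalization, and then treat the half-integral cases term by term.
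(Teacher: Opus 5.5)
Your plan is the paper's proof. You specialize \eqref{eq:general-split} to the four characteristics, reduce \(\alpha,\beta\) modulo \(2\), and evaluate the factors at \(i/2\): \(\vartheta_{00}(\tfrac i2)^2-\vartheta_{01}(\tfrac i2)^2\) for \(\Theta_1,\Theta_3\) and \(2\vartheta_{00}(\tfrac i2)\vartheta_{01}(\tfrac i2)\) for \(\Theta_2,\Theta_4\), both equal to \(\sqrt2\,\vartheta_{00}(i)^2\). The obstacles you anticipate do not arise, because \(b_1\equiv b_2\) and \(b_3\equiv b_4 \pmod 2\) in all four cases, so every \(\beta\) is integral (\((0,0,0,0)\), \((1,1,-1,0)\), \((1,0,0,0)\), \((0,-1,1,0)\)) and every \(\alpha\) is even; hence no \(\vartheta_{11}\) or non-standard characteristic appears, and all four \((\nu_1,\nu_2)\)-terms survive.
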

\begin{proof}
	Apply \eqref{eq:general-split} to the four characteristics of \eqref{eq:fJstar-blocked}.
	In the theta constants on the right-hand side of \eqref{eq:general-split}, we reduce the components of \(\alpha\) and \(\beta\) modulo \(2\) using \(\vartheta_{\alpha_j+2,\beta_j} = \vartheta_{\alpha_j,\beta_j}\) and \(\vartheta_{\alpha_j,\beta_j+2} = \e(\alpha_j/2)\,\vartheta_{\alpha_j,\beta_j}\), which follow from \eqref{eq:theta-series} by a shift of the summation index.
	We keep the original components of \(\alpha\) in the exponential factor and retain the factors produced by the second identity.
	For instance, \((a,b) = \bigl((1,1,0,0),\,0\bigr)\) gives \((\alpha_1,\alpha_2,\alpha_3,\alpha_4) = (2,2,2,0)\) and \((\beta_1,\beta_2,\beta_3,\beta_4) = (0,0,0,0)\), and we obtain
	\[
		\Theta_1
		=\frac14
		\left[
			\vartheta_{00}\!\left(\tfrac{\tau^\ast}{2}\right)\vartheta_{00}\!\left(-\tfrac{1}{2\tau^\ast}\right)
			+\vartheta_{01}\!\left(\tfrac{\tau^\ast}{2}\right)\vartheta_{01}\!\left(-\tfrac{1}{2\tau^\ast}\right)
			\right]
		\left[
			\vartheta_{00}\!\left(\tfrac{i}{2}\right)^2-\vartheta_{01}\!\left(\tfrac{i}{2}\right)^2
			\right].
	\]
	Applying \eqref{eq:classical-3} at \(\tau = 2i\), we have \(\vartheta_{00}(\tfrac i2) = \sqrt2\,\vartheta_{00}(2i)\) and \(\vartheta_{01}(\tfrac i2) = \sqrt2\,\vartheta_{10}(2i)\).
	Using these identities with \eqref{eq:classical-2} at \(\tau = i\) and \eqref{eq:classical-4}, we find that the second factor equals \(2\vartheta_{01}(i)^2 = \sqrt2\,\vartheta_{00}(i)^2\).
	The following table lists the characteristic \((a, b)\) and the vectors \((\alpha_1,\ldots,\alpha_4)\) and \((\beta_1,\ldots,\beta_4)\) for each of \(\Theta_1,\ldots,\Theta_4\).
	\begin{center}
		\begin{tabular}{c|c|c|c}
			             & \((a, b)\)                             & \((\alpha_1,\alpha_2,\alpha_3,\alpha_4)\)
			             & \((\beta_1,\beta_2,\beta_3,\beta_4)\)                                              \\
			\hline
			\(\Theta_1\) & \(\bigl((1,1,0,0),\, 0\bigr)\)
			             & \((2,2,2,0)\)                          & \((0,0,0,0)\)                             \\
			\(\Theta_2\) & \(\bigl(0,\, (1,1,0,0)\bigr)\)
			             & \((0,0,0,0)\)                          & \((1,1,-1,0)\)                            \\
			\(\Theta_3\) & \(\bigl((0,0,1,1),\, (1,1,1,1)\bigr)\)
			             & \((2,0,2,0)\)                          & \((1,0,0,0)\)                             \\
			\(\Theta_4\) & \(\bigl((1,1,1,1),\, (0,0,1,1)\bigr)\)
			             & \((4,2,4,0)\)                          & \((0,-1,1,0)\)
		\end{tabular}
	\end{center}
	For \(\Theta_2\), the first factor is the same as for \(\Theta_1\).
	The second factor is \(2\vartheta_{00}(\tfrac i2)\vartheta_{01}(\tfrac i2) = 4\vartheta_{00}(2i)\vartheta_{10}(2i) = 2\vartheta_{10}(i)^2 = \sqrt2\,\vartheta_{00}(i)^2\).
	Hence we have \(\Theta_2 = \Theta_1\).
	For \(\Theta_3\), the exponential factor in \eqref{eq:general-split} equals \((-1)^{\nu_1}(-1)^{\nu_2}\).
	The \(\nu_1\)-sum gives the difference \(\vartheta_{01}(\tfrac{\tau^\ast}{2})\vartheta_{00}(-\tfrac{1}{2\tau^\ast}) - \vartheta_{00}(\tfrac{\tau^\ast}{2})\vartheta_{01}(-\tfrac{1}{2\tau^\ast})\) in the stated formula for \(\Theta_3\).
	The \(\nu_2\)-sum gives \(\vartheta_{00}(\tfrac i2)^2 - \vartheta_{01}(\tfrac i2)^2 = \sqrt2\,\vartheta_{00}(i)^2\), and the two sums together yield the stated formula for \(\Theta_3\).
	For \(\Theta_4\), the exponential factor equals \((-1)^{\nu_1}\).
	Note that \(\alpha_3 + \alpha_4 = 4\).
	Using \(\vartheta_{0,2}(\tfrac i2) = \vartheta_{00}(\tfrac i2)\), we have
	\begin{align*}
		\Theta_4
		 & =\frac14
		\left[
			\vartheta_{00}\!\left(\tfrac{\tau^\ast}{2}\right)\vartheta_{01}\!\left(-\tfrac{1}{2\tau^\ast}\right)
			-\vartheta_{01}\!\left(\tfrac{\tau^\ast}{2}\right)\vartheta_{00}\!\left(-\tfrac{1}{2\tau^\ast}\right)
			\right]
		\cdot 2\vartheta_{00}\!\left(\tfrac i2\right)\vartheta_{01}\!\left(\tfrac i2\right)=-\Theta_3. \qedhere
	\end{align*}
\end{proof}

The following is the key lemma of this section.

\begin{lemma}
	\label{lem:duplication}
	We have
	\begin{equation*}
		\Theta_1\,\Theta_3
		= \frac{-i\tau^{\ast}}{4}\,\vartheta_{00}(i)^4
		\left( \vartheta_{01}(\tau^\ast)^4 - \vartheta_{10}(\tau^\ast)^4 \right).
	\end{equation*}
\end{lemma}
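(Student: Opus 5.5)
The plan is to start from Corollary~\ref{cor:four-thetas}, multiply the two expressions, and reduce everything to theta constants at \(\tau^\ast\) with \eqref{eq:classical-1}--\eqref{eq:classical-3}. I abbreviate
\[
A=\vartheta_{00}(\tfrac{\tau^\ast}{2}),\quad A'=\vartheta_{01}(\tfrac{\tau^\ast}{2}),\quad B=\vartheta_{00}(-\tfrac{1}{2\tau^\ast}),\quad B'=\vartheta_{01}(-\tfrac{1}{2\tau^\ast}).
\]
Corollary~\ref{cor:four-thetas} gives \(\Theta_1\Theta_3=\tfrac{2}{16}\vartheta_{00}(i)^4\,(AB+A'B')(A'B-AB')\). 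Expanding the brackets and regrouping gives
\[
(AB+A'B')(A'B-AB') = AA'\,(B^2-B'^2) - BB'\,(A^2-A'^2).
\]

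First I handle the factors in \(B,B'\) with the functional equation \eqref{eq:classical-3} at the modulus \(2\tau^\ast\in i\R_{>0}\). With the principal branch this gives \(B=\sqrt{-2i\tau^\ast}\,\vartheta_{00}(2\tau^\ast)\) and \(B'=\sqrt{-2i\tau^\ast}\,\vartheta_{10}(2\tau^\ast)\). Then \eqref{eq:classical-2} at \(\tau=\tau^\ast\) yields
\[
B^2-B'^2=-2i\tau^\ast\,\vartheta_{01}(\tau^\ast)^2,\qquad BB'=-i\tau^\ast\,\vartheta_{10}(\tau^\ast)^2.
\]

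Next I handle the factors in \(A,A'\) with \eqref{eq:classical-1} at \(\tau=\tau^\ast/2\). This writes \(A,A'=\vartheta_{00}(2\tau^\ast)\pm\vartheta_{10}(2\tau^\ast)\). Hence
\[
AA'=\vartheta_{00}(2\tau^\ast)^2-\vartheta_{10}(2\tau^\ast)^2=\vartheta_{01}(\tau^\ast)^2,\qquad A^2-A'^2=4\vartheta_{00}(2\tau^\ast)\vartheta_{10}(2\tau^\ast)=2\vartheta_{10}(\tau^\ast)^2,
\]
where both right-hand equalities come from \eqref{eq:classical-2} at \(\tau=\tau^\ast\).

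Substituting these four values, the bracket becomes \(-2i\tau^\ast\bigl(\vartheta_{01}(\tau^\ast)^4-\vartheta_{10}(\tau^\ast)^4\bigr)\). Multiplying by \(\tfrac{2}{16}\vartheta_{00}(i)^4\) gives the stated formula.

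The computation is routine once the regrouping above is found. The only delicate point is the square-root branch in \eqref{eq:classical-3}. Since \(\tau^\ast\in i\R_{>0}\) by Lemma~\ref{lem:eta-c-positivity}, the quantity \(-2i\tau^\ast\) is positive. So the principal root \(\sqrt{-2i\tau^\ast}\) is positive for both \(B\) and \(B'\), and its square is exactly \(-2i\tau^\ast\), with no sign ambiguity. I would state this explicitly when invoking \eqref{eq:classical-3}.
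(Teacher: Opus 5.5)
Your proof is correct and uses the same identities as the paper's proof: \eqref{eq:classical-3} at \(2\tau^\ast\), \eqref{eq:classical-1} at \(\tau^\ast/2\), and \eqref{eq:classical-2} at \(\tau^\ast\). The only difference is bookkeeping: the paper first simplifies each factor separately to \(\Theta_1,\Theta_3=\frac12\vartheta_{00}(i)^2\sqrt{-i\tau^\ast}\bigl(\vartheta_{01}(\tau^\ast)^2\pm\vartheta_{10}(\tau^\ast)^2\bigr)\) and then multiplies, whereas you multiply first and regroup, so that \(\sqrt{-2i\tau^\ast}\) enters only through its square and the branch question disappears entirely.
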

\begin{proof}
	By \eqref{eq:classical-3} at \(\tau=2\tau^\ast\), where \(\sqrt{-2i\tau^\ast}\) is positive, we have
	\[
		\vartheta_{00}\!\left(-\tfrac{1}{2\tau^\ast}\right)
		=\sqrt{-2i\tau^\ast}\,\vartheta_{00}(2\tau^\ast),
		\qquad
		\vartheta_{01}\!\left(-\tfrac{1}{2\tau^\ast}\right)
		=\sqrt{-2i\tau^\ast}\,\vartheta_{10}(2\tau^\ast).
	\]
	Using \eqref{eq:classical-1} at \(\tau^\ast/2\) and \eqref{eq:classical-2} at \(\tau^\ast\), we obtain
	\begin{align*}
		\vartheta_{00}\!\left(\tfrac{\tau^\ast}{2}\right)\vartheta_{00}(2\tau^\ast)
		+\vartheta_{01}\!\left(\tfrac{\tau^\ast}{2}\right)\vartheta_{10}(2\tau^\ast)
		 & =\vartheta_{01}(\tau^\ast)^2+\vartheta_{10}(\tau^\ast)^2,
		\\
		\vartheta_{01}\!\left(\tfrac{\tau^\ast}{2}\right)\vartheta_{00}(2\tau^\ast)
		-\vartheta_{00}\!\left(\tfrac{\tau^\ast}{2}\right)\vartheta_{10}(2\tau^\ast)
		 & =\vartheta_{01}(\tau^\ast)^2-\vartheta_{10}(\tau^\ast)^2.
	\end{align*}
	Substituting these four identities into the formulas of Corollary~\ref{cor:four-thetas}, we obtain
	\begin{align*}
		\Theta_1
		 & =\frac12\,\vartheta_{00}(i)^2\sqrt{-i\tau^\ast}
		\left(\vartheta_{01}(\tau^\ast)^2+\vartheta_{10}(\tau^\ast)^2\right), \\
		\Theta_3
		 & =\frac12\,\vartheta_{00}(i)^2\sqrt{-i\tau^\ast}
		\left(\vartheta_{01}(\tau^\ast)^2-\vartheta_{10}(\tau^\ast)^2\right).
	\end{align*}
	Multiplying these two equalities, we obtain the stated formula.
\end{proof}

\section{Evaluation of the Constant}
\label{sec:evaluation}
In this section, we identify the modulus \(\tau^\ast\) with a value of a Schwarz map of the Gauss hypergeometric differential equation, and complete the evaluation of the constant \(\kappa\).

A Schwarz map is a ratio of two linearly independent solutions of the Gauss hypergeometric differential equation with parameters \(\tfrac12,\tfrac12;1\), and is determined only up to a M\"obius transformation.
We use the ratio exhibited in \eqref{eq:taustar-schwarz}.
We begin with the following lemma.

\begin{lemma}
	\label{lem:hypergeometric}
	We have
	\begin{equation}
		\label{eq:taustar-schwarz}
		\tau^{\ast} = i\,\frac{\mathcal{F}(1-m)}{\mathcal{F}(m)},
	\end{equation}
	and consequently
	\begin{gather}
		\label{eq:H1}
		\sqrt{c}\;\vartheta_{00}(\tau^\ast)^4
		= \vartheta_{01}(\tau^\ast)^4 - \vartheta_{10}(\tau^\ast)^4,
		\\
		\label{eq:H2}
		\mathcal{K}_2 = \frac{\sqrt2\,\pi\,\vartheta_{00}(\tau^\ast)^2}{\sqrt{c}},
		\\
		\label{eq:H3}
		\Transpose{\eta_c}U\eta_c
		= u^2 - v^2 = -8i\tau^\ast \mathcal{K}_2^2
		= -\frac{16\pi^2 i \tau^\ast\,\vartheta_{00}(\tau^\ast)^4}{c}.
	\end{gather}
\end{lemma}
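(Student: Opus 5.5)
We first derive \eqref{eq:taustar-schwarz} directly from Proposition~\ref{prop:eta-c} and Lemma~\ref{lem:eta-c-positivity}. Since \(v = 2i\mathcal{K}_1\) and \(u = 2i(\mathcal{K}_1-\sqrt2\mathcal{K}_2)\), we have
\[
	u+v = 2i(2\mathcal{K}_1-\sqrt2\mathcal{K}_2),\qquad u-v = -2\sqrt2\,i\,\mathcal{K}_2,
\]
so that
\[
	\tau^\ast = -i\,\frac{u+v}{u-v} = i\,\frac{\sqrt2\mathcal{K}_1-\mathcal{K}_2}{\mathcal{K}_2}.
\]
Substituting \eqref{eq:K-m}, the common factor \(\sqrt2\pi/\sqrt c\) cancels, and \(\sqrt2\mathcal{K}_1-\mathcal{K}_2\) becomes \(\mathcal{F}(1-m)\) in these units while \(\mathcal{K}_2\) becomes \(\mathcal{F}(m)\). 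This gives \(\tau^\ast = i\mathcal{F}(1-m)/\mathcal{F}(m)\).

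Next we identify \(\lambda(\tau^\ast)\). The classical inversion of the elliptic modular function states that for \(0<m<1\) the point \(\tau = i\mathcal{F}(1-m)/\mathcal{F}(m)\in i\R_{>0}\) satisfies \(\lambda(\tau)=m\), and that \(\vartheta_{00}(\tau)^2 = \mathcal{F}(m)\) there. The second fact is consistent with \eqref{eq:classical-5}, and the first is Jacobi's theorem, cf.\ \cite[Chapter 2]{BB98}. This is the only genuinely external input. If it must be justified from the listed identities, we use two properties. First, \(\lambda\) is a bijection \(i\R_{>0}\to(0,1)\). Second, \(\lambda(-1/\tau) = 1-\lambda(\tau)\), by \eqref{eq:classical-3} together with the first identity in \eqref{eq:classical-5}. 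Applying \eqref{eq:classical-5} at \(\tau\) and at \(-1/\tau\) then gives \(\mathcal{F}(1-\lambda)/\mathcal{F}(\lambda) = \vartheta_{00}(-1/\tau)^2/\vartheta_{00}(\tau)^2 = -i\tau\). The map \(m\mapsto \mathcal{F}(1-m)/\mathcal{F}(m)\) is strictly decreasing, hence injective, so \(\lambda(\tau^\ast)=m\). I expect this identification to be the main point of care, mainly in keeping the branches on \(i\R_{>0}\) consistent.

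The remaining identities follow by substitution. Write \(\lambda=m\). By \eqref{eq:classical-5},
\[
	\vartheta_{01}(\tau^\ast)^4-\vartheta_{10}(\tau^\ast)^4 = (1-2m)\,\vartheta_{00}(\tau^\ast)^4 = \sqrt c\,\vartheta_{00}(\tau^\ast)^4,
\]
which is \eqref{eq:H1}. Next, \(\mathcal{K}_2 = \sqrt2\pi\mathcal{F}(m)/\sqrt c = \sqrt2\pi\,\vartheta_{00}(\tau^\ast)^2/\sqrt c\), which is \eqref{eq:H2}. Finally, by Proposition~\ref{prop:eta-c},
\[
	u^2-v^2 = 8\mathcal{K}_2(\sqrt2\mathcal{K}_1-\mathcal{K}_2) = 8\mathcal{K}_2^2\cdot(-i\tau^\ast) = -8i\tau^\ast\mathcal{K}_2^2,
\]
and inserting \eqref{eq:H2} gives \(-16\pi^2 i\tau^\ast\vartheta_{00}(\tau^\ast)^4/c\), which is \eqref{eq:H3}.
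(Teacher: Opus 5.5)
Your proposal is correct and matches the paper's proof. You obtain \(\tau^\ast = i\mathcal{F}(1-m)/\mathcal{F}(m)\) from Proposition~\ref{prop:eta-c} and \eqref{eq:K-m}, identify \(\lambda(\tau^\ast)=m\) by the classical inversion, and derive \eqref{eq:H1}--\eqref{eq:H3} by the same substitutions. Your optional derivation of \(\lambda(\tau^\ast)=m\) is also valid; it uses \eqref{eq:classical-3}, both identities in \eqref{eq:classical-5} on \(i\mathbb{R}_{>0}\), and the strict monotonicity of \(t\mapsto\mathcal{F}(1-t)/\mathcal{F}(t)\), and so removes the separate citation the paper relies on at that step.
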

\begin{proof}
	By \eqref{eq:eta-c} and Lemma~\ref{lem:eta-c-positivity}, we have
	\[
		\tau^\ast
		=i\left(\frac{\sqrt2\mathcal{K}_1}{\mathcal{K}_2}-1\right)
		=i\,\frac{\mathcal{F}(1-m)}{\mathcal{F}(m)},
		\qquad
		u^2-v^2=-8i\tau^\ast\mathcal{K}_2^2>0.
	\]
	This proves \eqref{eq:taustar-schwarz}; the positivity of \(u^2-v^2\) is established in Lemma~\ref{lem:eta-c-positivity}.
	By the classical theory of the elliptic modular function \(\lambda\), we have \(\lambda\bigl(i\mathcal{F}(1-m)/\mathcal{F}(m)\bigr) = m\); see \cite[Chapter 2]{BB98} and \cite[Chapter I]{Mum07a}.
	By \eqref{eq:taustar-schwarz}, we have \(\tau^\ast = i\mathcal{F}(1-m)/\mathcal{F}(m)\), and hence \(\lambda(\tau^\ast) = m\).
	Since \(\tau^\ast\in i\R_{>0}\) and \(\lambda\) restricts to a bijection from \(i\R_{>0}\) onto \((0,1)\), the modulus \(\tau^\ast\) is the unique point of \(i\R_{>0}\) at which \(\lambda\) takes the value \(m\).
	The second identity in \eqref{eq:classical-5} then gives \(\vartheta_{00}(\tau^\ast)^2 = \mathcal{F}(\lambda(\tau^\ast)) = \mathcal{F}(m)\).
	Equation~\eqref{eq:H1} follows from \(1 - 2\lambda(\tau^\ast) = \bigl(\vartheta_{01}(\tau^\ast)^4 - \vartheta_{10}(\tau^\ast)^4\bigr)/\vartheta_{00}(\tau^\ast)^4 = \sqrt c\), where we use the first identity in \eqref{eq:classical-5}.
	Equation~\eqref{eq:H2} follows from \(\mathcal{K}_2 = \sqrt2\pi c^{-1/2} \mathcal{F}(m) = \sqrt2\pi c^{-1/2}\vartheta_{00}(\tau^\ast)^2\).
	Finally, we substitute \eqref{eq:H2} into the identity \(u^2 - v^2 = -8i\tau^\ast \mathcal{K}_2^2\) established at the beginning of the proof, and obtain the last expression in \eqref{eq:H3}.
\end{proof}

\begin{proof}[Proof of the \ref{thm:main}]
	By \eqref{eq:fJstar-blocked}, Corollary~\ref{cor:four-thetas}, Lemma~\ref{lem:duplication}, and \eqref{eq:H1} of Lemma~\ref{lem:hypergeometric}, we have
	\begin{align*}
		f_{J^\ast}(\eta_c)
		 & = \frac{\vartheta_{00}(i)^8}{4}\,(2\Theta_1^2)(2\Theta_3^2)
		= \vartheta_{00}(i)^8\,(\Theta_1\Theta_3)^2                    \\
		 & = \vartheta_{00}(i)^8
		\left( \frac{-i\tau^\ast}{4}\,\vartheta_{00}(i)^4\,
		\sqrt{c}\;\vartheta_{00}(\tau^\ast)^4 \right)^2
		= -\frac{\tau^{\ast 2}}{16}\,\vartheta_{00}(i)^{16}\,
		c\,\vartheta_{00}(\tau^\ast)^8.
	\end{align*}
	On the other hand, by \eqref{eq:H3} of Lemma~\ref{lem:hypergeometric}, we have
	\[
		(\Transpose{\eta_c}U\eta_c)^2
		= -\frac{256\pi^4 \tau^{\ast2}\,\vartheta_{00}(\tau^\ast)^8}{c^2}.
	\]
	Substituting these two expressions into \eqref{eq:kappa-anchor}, we obtain
	\[
		\kappa
		= \frac{-\frac{\tau^{\ast2}}{16}\vartheta_{00}(i)^{16} c\,\vartheta_{00}(\tau^\ast)^8}
		{\left( -\frac{256\pi^4\tau^{\ast2}\vartheta_{00}(\tau^\ast)^8}{c^2} \right)(-c^3)}
		= -\frac{\vartheta_{00}(i)^{16}}{4096\,\pi^4}
		= -\frac{\vartheta_{00}(i)^{16}}{(8\pi)^4}
		= -\frac{1}{2^{12}\,\varGamma(3/4)^{16}},
	\]
	where the last equality uses \(\vartheta_{00}(i) = \pi^{1/4}/\varGamma(3/4)\).
	This computation determines the value \(\kappa\) of the function \(G\), and the \ref{thm:main} follows for every \((2,2,2,2)\)-partition \(J\).
\end{proof}

Combining the \ref{thm:main} with the continuous extensions of \(f_{J^\ast}\) and of the theta constants to the degenerate configuration, we obtain the boundary value
\[
	f_{J^\ast}(\eta_c)
	=\frac{\vartheta_{00}(i)^{16}}{(8\pi)^4}\,c^3(\Transpose{\eta_c}U\eta_c)^2
	=\frac{c^3}{2^{12}\varGamma(3/4)^{16}}\,(\Transpose{\eta_c}U\eta_c)^2.
\]

\begin{remark}
	\label{rem:limit-curve}
	The four branch points \(0,c,1,\infty\) of the limit curve \(C_c\colon w^4 = z^3(z-c)^3(z-1)\) carry the weights \(\tfrac34,\tfrac34,\tfrac14,\tfrac14\) of \(\frac{dz}{w}\), respectively.
	The sum of these weights is \(2\).
	The curves \(C_c\) with \(c\in(0,1)\) thus form one of the one-dimensional families of \cite{DM86}.
	The modulus \(\tau^\ast\) satisfies \(\lambda(\tau^\ast) = (1-\sqrt c)/2\).
	The involution \(\rho^2\colon C_c \ni (z,w) \mapsto (z,-w) \in C_c\) lifts to the normalization, and the substitution \(y = w^2/(z(z-c))\) identifies the quotient of the normalization by \(\rho^2\) with the Legendre elliptic curve \(y^2 = z(z-c)(z-1)\).
	The same degeneration performed only at \(x_4 = x_5 = x_6 = c\) leads to the three-dimensional complex ball attached to six points.
	The Thomae-type formulas for six points are the counterparts of the \ref{thm:main}, with the constant \(1/\bigl( (16\pi)^2\varGamma(3/4)^8 \bigr)\); see \cite{MN26}.
\end{remark}

\section*{Acknowledgments}
The author is deeply grateful to Professor {\sc Matsumoto} Keiji, under whose supervision this work was carried out, for guidance and constant encouragement.
The author also thanks Professor {\sc Terasoma} Tomohide for helpful advice and discussions on this work.

During the preparation of this paper, the author used Claude, an AI model developed by Anthropic, to copy-edit the English.
The author reviewed all the resulting text and takes full responsibility for the content of this paper.

\printbibliography

\end{document}